\newcommand*\circled[1]{\tikz[baseline=(char.base)]{
            \node[shape=circle,draw,inner sep=4pt] (char) {#1};}}
\theoremstyle{definition}
\newtheorem{definition}{Definition}
\newtheorem{example}[definition]{Example}
\newtheorem{remark}[definition]{Remark}
\theoremstyle{plain}
\newtheorem{lemma}[definition]{Lemma}
\newtheorem{theorem}[definition]{Theorem}
\newtheorem{corollary}[definition]{Corollary}
\newcommand\A{{\mathbf A}}
\newcommand\B{{\mathbf B}}
\newcommand\C{{\mathbf C}}
\newcommand\Fm{\mathbf{Fm}}
\newcommand\WK{{\mathbf{WK}}}
\newcommand\CL{\ensuremath{\mathrm{CL}}\xspace}
\newcommand\PWK{\ensuremath{\mathrm{PWK}}\xspace}
\bmdefine{\boldstar}{\mathchoice{\textstyle*}{\textstyle*}{\textstyle*}{\scriptstyle*}}
\newcommand\Alg[1]{\if#1*\operatorname{\mathsf{Alg}*}\else\operatorname{\mathsf{Alg}}#1\fi}
\newcommand\Mod[1]{\if#1*\operatorname{\mathsf{Mod}*}\else\operatorname{\mathsf{Mod}}#1\fi}
\bmdefine{\Leibniz}{\Omega}        
\bmdefine{\frege}{\Lambda}         
\newcommand{\tarskidsp}{\mathord%
   {\m@th\raisebox{0pt}[0pt][0pt]{$\stackrel%
   {\raisebox{-2.7pt}[0ex][0pt]{$\displaystyle \,\?\thicksim$}}%
   {\displaystyle\Leibniz}$}}}
\newcommand{\tarskitxt}{\mathord%
   {\m@th\raisebox{0pt}[0pt][0pt]{$\stackrel%
   {\raisebox{-2.7pt}[0ex][0pt]{$\,\?\thicksim$}}{\displaystyle\Leibniz}$}}}
\newcommand{\tarskiscr}{\mathord%
   {{\m@th\raisebox{0pt}[0pt][0pt]{$\stackrel%
   {\raisebox{-2.4pt}[0ex][0pt]{$\scriptstyle \,\?\thicksim$}}%
   {\scriptstyle\Leibniz}$}}}}
\newcommand{\tarskiscrscr}{\mathord%
   {{\m@th\raisebox{0pt}[0pt][0pt]{$\stackrel%
   {\raisebox{-2pt}[0ex][0pt]{$\scriptscriptstyle \,\?\thicksim$}}%
   {\scriptscriptstyle\Leibniz}$}}}}
\newcommand{\Tarski}{\@ifnextchar ^ %
   {\mathchoice{\tarskidsp\kern-.07em}{\tarskitxt\kern-.07em}%
   {\tarskiscr\kern-.07em}{\tarskiscrscr\kern-.07em}}%
   {\mathchoice{\tarskidsp}{\tarskitxt}{\tarskiscr}{\tarskiscrscr}}}
\DeclareMathAlphabet{\mathbfsf}{\encodingdefault}{\sfdefault}{bx}n
\providecommand*{\Dashv}{\mathrel{\mathpalette\@Dashv\vDash}}
\newcommand*{\@Dashv}[2]{\reflectbox{$\m@th#1#2$}}
\renewcommand\geq{\geqslant}
\newcommand\PL{{\mathcal{P}}_{\text{\l}}}
\newcommand{\bit}{\begin{itemize}}    
\newcommand{\eit}{\end{itemize}}
\newcommand{\ben}{\begin{enumerate}}
\newcommand{\een}{\end{enumerate}}
\newcommand{\benroman}{\ben[\normalfont (i)]}  
\let\eroman\een
\newcommand{\benbullet}{\ben[\textbullet]}     
\let\ebullet\een
\newcommand{\bde}{\begin{description}}
\newcommand{\ede}{\end{description}}
\newcommand{\Var}{\mathnormal{V\mkern-.8\thinmuskip ar}} 
\newcommand{\?}{\ensuremath{\mkern0.4\thinmuskip}}   
\begin{document}

\title{Logics of variable inclusion and the lattice of consequence relations}
\date{\today}

\author{Michele Pra Baldi}
\address{Michele Pra Baldi, University of Cagliari, Italy}
\email{m.prabaldi@gmail.com}
\keywords{}
\begin{abstract}
In this paper, firstly, we determine the number of sublogics of variable inclusion of an arbitrary finitary logic $\vdash$ with partition function. Then, we investigate their position into the lattice of consequence relations over  the language of $\vdash$.

\end{abstract}
\maketitle

\section{introduction}

The family of logics of variable inclusion splits into two subfamilies, namely \emph{logics of left variable inclusion} and \emph{logics of right variable inclusion}. More precisely, given a logic $\vdash$, the two sublogics that can be defined by means of a different \emph{variable inclusion} principle are

\[
\Gamma \vdash^{l} \varphi \Longleftrightarrow \text{ there is }\Delta \subseteq \Gamma \text{ s.t. }\Var(\Delta)\subseteq\Var(\varphi) \text{ and } \Delta\vdash\varphi,
\]
and
\[
\Gamma\vdash^{r}\varphi\iff \left\{ \begin{array}{ll}
\Gamma\vdash\varphi  \ \text{and} \ \Var(\varphi)\subseteq\Var(\Gamma)& \text{or}\\
\Sigma\subseteq\Gamma, & \\
  \end{array} \right.  
\]

where $\Sigma$ is an antitheorem of $\vdash$ (see Definition \ref{def inconsistency terms}).

Here, the logic $\vdash^l$ denotes the left variable inclusion companion of $\vdash$, while $\vdash^r$ is its right variable inclusion counterpart. 
The best known examples of variable inclusion logics arise when $\vdash$ is considered to be classical logic. In this case, $\vdash^l$ is known as \textit{paraconsistent weak Kleene logic} (\PWK for short) \cite{Kleene,Hallden} and $\vdash^r$ as \textit{Bochvar logic} ($\mathsf{B}_{3}$)\cite{Bochvar,Kleene,Hallden}. These two logics are semantically defined on the base of the so-called weak Kleene tables 

\medskip

\begin{center}\renewcommand{\arraystretch}{1.25}
\begin{tabular}{>{$}c<{$}|>{$}c<{$}>{$}c<{$}>{$}c<{$}}
   \land & 0 & n & 1 \\[.2ex]
 \hline
       0 & 0 & n & 0 \\
       n & n & n & n \\          
       1 & 0 & n & 1
\end{tabular}
\qquad
\begin{tabular}{>{$}c<{$}|>{$}c<{$}>{$}c<{$}>{$}c<{$}}
   \lor & 0 & n & 1 \\[.2ex]
 \hline
     0 & 0 & n & 1 \\
     n & n & n & n \\          
     1 & 1 & n & 1
\end{tabular}
\qquad
\begin{tabular}{>{$}c<{$}|>{$}c<{$}}
  \lnot &  \\[.2ex]
\hline
  1 & 0 \\
  n & n \\
  0 & 1 \\
\end{tabular}
\end{center}
\vspace{10pt}

as follows:
\benbullet

\item $\langle\WK,\{1\}\rangle=\mathsf{B_{3}}$
\item $\langle\WK\{1,n\}\rangle=\PWK$, where $\WK$ is the three elements algebra induced by the above tables.

\ebullet

Logics of variable inclusion have recently been influential in several research areas, including the philosophy of language \cite{beall2016off}, theories of truth \cite{Szmuctruth} and, of course, logic \cite{cc,Szmuc,BonzioMorascoPrabaldi,BonzioPraBaldi,Bonzio16}.
On the logical side, the fact that $\PWK$ actually corresponds to the left variable inclusion companion of classical logic is shown in \cite{CiuniCarrara}, while \cite{Bonzio16} contains an algebraic study of \PWK with the tools of modern abstract algebraic logic (AAL).  
The work in \cite{BonzioMorascoPrabaldi}, which  also adopts the AAL framework, identifies a general method to
turn a complete matrix semantics for an arbitrary logic $\vdash$ into a complete matrix semantics for its left
variable inclusion companion.
 A similar task is accomplished in \cite{BonzioPraBaldi} for finitary right variable inclusion logics. 

Of course, nothing prevents from iterating the definitions of left and right variable inclusion logics. For instance, one can define the logic $\vdash^{lr}$, that is the right variable inclusion companion of the left variable inclusion companion of $\vdash$. The only known example of this kind is the logic $\mathsf{K^{w}_{4n}}$, investigated in the very recent papers \cite{Tomova,Petrukhin2}.
In general, by looking at the above definitions, it is immediate to verify that each logic of variable inclusion of $\vdash$ is a sublogic of $\vdash$.

 The general theory of closure operators states that, given a set $A$, the set of all the structural closure operators on $A$ can be equipped with a (complete) lattice structure. One of the outcomes of the pioneering work of \cite{BP89} and of the more recent developments in abstract algebraic logic contained in \cite{Font16, blok2006equivalence, font2015m} states that  there is a bijective correspondence between logics in the language $\mathcal{L}$ and structural closure operators over the set of formulas $Fm_\mathcal{L}$ equipped with a monoid action (whose elements represents substitutions). This perspective highlights that the investigation of the lattice of logics over a fixed language $\mathcal{L}$ is
worth pursuing.

In \cite{Szmuc}, a first attempt to determine how $\mathsf{B}_3$ and \PWK relates with other sublogics of \CL is offered. However, a general and systematic method that determines how the logics of variable inclusion of $\vdash$ fit
into the lattice of logics over $\mathcal{L}$ (with $\vdash$ being a finitary logic over a fixed language $\mathcal{L}$) is still missing.

The main aim of this paper is to fill this gap, by solving the above mentioned problem in full generality.  It will turn out that the number of sublogics of variable inclusion of a logic $\vdash$ is no greater that 8 if $\vdash$ possesses an antitheorem, and no greater that 5 otherwise.
In the final section, we consider the example of classical logic, and we describe in a transparent way the relations among  its sublogics of variable inclusion. Remarkably, it turns out that only four of these eight logics have been considered in the literature until now.

\section{Preliminaries}\label{sec: preliminari}

For standard background on closure operators and abstract algebraic logic we refer the reader respectively to  \cite{BuSa00,bergmann}and \cite{BP89,Cz01,Font16}. Unless stated otherwise, we work within a fixed but arbitrary algebraic language. We denote algebras by $\A, \B, \C\dots$ respectively with universes $A, B, C \dots$ 

\subsection{Abstract algebraic logic}

Let $\Fm$ be the algebra of formulas built up over a countably infinite set $\Var$ of variables. Given a formula $\varphi\in Fm$, we denote by $\Var(\varphi)$ the set of variables really occurring in $\varphi$. Similarly, given $\Gamma\subseteq Fm$, we set
\[
\Var(\Gamma)=\bigcup \{\Var(\gamma)\colon \gamma\in\Gamma\}.
\]
A \emph{logic} is a substitution invariant consequence relation $\vdash \?\? \subseteq \mathcal{P}(Fm) \times Fm$ in the sense that for every substitution $\sigma \colon \Fm \to \Fm$,
\[
\text{if }\Gamma \vdash \varphi \text{, then }\sigma [\Gamma] \vdash \sigma (\varphi).
\]
 A logic $\vdash$ is \emph{finitary} when the following holds for all $\Gamma\cup\varphi\subseteq Fm$:
\begin{align*}
\Gamma\vdash\varphi \Longleftrightarrow \exists \Delta \subseteq\Gamma \text{ s.t. } \Delta \text{ is finite and } \Delta\vdash\varphi.
\end{align*}

 A \emph{matrix} is a pair $\langle \A, F\rangle$ where $\A$ is an algebra and $F \subseteq A$. In this case, $\A$ is called the \textit{algebraic reduct} of the matrix $\langle \A, F \rangle$.
 Every class of matrices $\mathsf{M}$ induces a logic as follows:
\begin{align*}
\Gamma \vdash_{\mathsf{M}} \varphi \Longleftrightarrow& \text{ for every }\langle \A, F \rangle \in \mathsf{M} \text{ and hom. }h \colon \Fm \to \A,\\
& \text{ if }h[\Gamma] \subseteq F\text{, then }h(\varphi) \in F.
\end{align*}
A logic $\vdash$ is \emph{complete} w.r.t.\ a class of matrices $\mathsf{M}$ when it coincides with $\vdash_{\mathsf{M}}$. 

 A matrix $\langle \A, F\rangle$ is a \emph{model} of a logic $\vdash$ when
\begin{align*}
\text{if }\Gamma \vdash \varphi, &\text{ then for every hom. }h \colon \Fm \to \A,\\  
&\text{ if }h[\Gamma] \subseteq F\text{, then }h(\varphi) \in F.
\end{align*}
A set $F \subseteq A$ is a (deductive) \textit{filter} of $\vdash$ on $\A$, or simply a $\vdash$-\textit{filter}, when the matrix $\langle \A, F \rangle$ is a model of $\vdash$. 
We denote the class of matrix models of $\vdash$ as $\Mod(\vdash)$.

The following definition originates in \cite{LaPr17}, but see also \cite{CampercholiRaftery,JGR13}
\begin{definition}\label{def inconsistency terms}
A set of formulas $\Sigma$  in an antitheorem of a logic $\vdash$ if $\sigma[\Sigma] \vdash \varphi$ for every substitution $\sigma$ and formula $\varphi$.
\end{definition}

\begin{example}\label{Exa:inconsistency-terms}
For any formula $\varphi$, the set $\{ \lnot(\varphi \to \varphi) \}$ is an antitheorem of all superintuitionistic logics. Similarly, the sets $\{\varphi,\neg\varphi\},\{\varphi\land\neg\varphi\}$ are antitheorems of all the expansions of Classical logic and Strong Kleene logic. 
\qed
\end{example}

\begin{remark}\label{rem: inconsistency term in var x only}
Observe that if $\vdash$ has an antitheorem, then $\vdash$ has an antitheorem only in variable $x$. If, moreover, $\vdash$ is finitary, then it has a \emph{finite} antitheorem only in variable $x$. 
\qed
\end{remark}
Given two logics $\vdash,\vdash^\prime$ in the same language, we say that  $\vdash^\prime$ is a $sublogic$ of $\vdash$ (in symbols $\vdash^\prime\leq\vdash$) if for every $\Gamma\cup\{\varphi\}\subseteq Fm$,

\[
\Gamma\vdash^\prime\varphi \  \text{ entails} \  \Gamma\vdash\varphi.
\]

Let $\mathcal{L}$ be an algebraic language. The The set $\mathcal{L}^\vdash$ of all logics in the language $\mathcal{L}$ forms a complete lattice (see \cite{W88} for details), where, given  $\vdash_i$, $i\in I$ logics over $\mathcal{L}$, the operations are defined as follows
\begin{align*}
\bigwedge_{i\in I}\vdash_i&\coloneqq\bigcap_{i\in I}\vdash_i\\
\bigvee_{i\in I}\vdash_i&\coloneqq\bigcap\{\vdash:\ \vdash\geq\vdash_i \text{for every} \ i\in I\}.
\end{align*}


An immediate consequence is that, given a logic $\vdash\in\mathcal{L}^\vdash$, the set of sublogics of $\vdash$ is a sublattice of $\mathcal{L}^\vdash$. Given a logic $\vdash$, we denote the set of its logics of variable inclusion by $\mathcal{SV}(\vdash)$.

\subsection{P\l onka sums}

The main mathematical tool that allows for a systematic study of logics of variable inclusion is an algebraic construction coming from universal algebra, and more specifically from the study of regular varieties, i.e. varieties of algebras satisfying only equations $\sigma\thickapprox\delta$ in which $\Var(\sigma)=\Var(\delta)$. Such construction, known as \emph{P\l onka sums}, originates in the late 1960's from a series of papers published by the polish mathematician J.P\l onka, who first provided a general representation theorem for regular varieties.

For standard information on P\l onka sums we refer the reader to \cite{Plo67a, Plo67,Romanowska92, romanowska2002modes}. A \textit{semilattice} is an algebra $\A = \langle A, \lor\rangle$, where $\lor$ is a binary commutative, associative and idempotent operation. Given a semilattice $\A$ and $a, b \in A$, we set
\[
a \leq b \Longleftrightarrow a \lor b = b.
\]
It is easy to see that $\leq$ is a partial order on $A$.
\begin{definition}\label{Def:Directed system of algebras}
A \textit{direct system of algebras} consists in 
\benroman
\item a semilattice $I = \langle I, \lor\rangle$;
\item a family of algebras $\{ \A_{i} : i \in I \}$ with disjoint universes;
\item a homomorphism $f_{ij} \colon \A_{i} \to \A_{j}$, for every $i, j \in I$ such that $i \leq j$;
\eroman
moreover, $f_{ii}$ is the identity map for every $i \in I$, and if $i \leq j \leq k$, then $f_{ik} = f_{jk} \circ f_{ij}$.
\end{definition}

Let $X$ be a direct system of algebras as above. The \textit{P\l onka sum} of $X$, in symbols $\PL(X)$ or $\PL(\A_{i})_{i \in I}$, is the algebra defined as follows. The universe of $\PL(\A_{i})_{i \in I}$ is the union $\bigcup_{i \in I}A_{i}$. Moreover, for every $n$-ary basic operation $f$ and $a_{1}, \dots, a_{n} \in \bigcup_{i \in I}A_{i}$, we set
\[
f^{\PL(\A_{i})_{i \in I}}(a_{1}, \dots, a_{n}) \coloneqq f^{\A_{j}}(f_{i_{1} j}(a_{1}), \dots, f_{i_{n} j}(a_{n}))
\]
where $a_{1} \in A_{i_{1}}, \dots, a_{1} \in A_{i_{n}}$ and $j = i_{1} \lor \dots \lor i_{n}$.\ 

Observe that if in the above display we replace $f$ by any complex formula $\varphi$ in $n$-variables, we still have that
\[
\varphi^{\PL(\A_{i})_{i \in I}}(a_{1}, \dots, a_{n}) = \varphi^{\A_{j}}(f_{i_{1} j}(a_{1}), \dots, f_{i_{n} j}(a_{n})).
\]

The theory of P\l onka sums is strictly related with a special kind of operation:

\begin{definition}\label{def: partition function}
Let $\A$ be an algebra of type $\nu$. A function $\cdot\colon A^2\to A$ is a \emph{partition function} in $\A$ if the following conditions are satisfied for all $a,b,c\in A$, $ a_1 , ..., a_n\in A^{n} $ and for any operation $g\in\nu$ of arity $n\geqslant 1$.
\begin{enumerate}[label=\textbf{P\arabic*}., leftmargin=*]
\item $a\cdot a = a$
\item $a\cdot (b\cdot c) = (a\cdot b) \cdot c $
\item $a\cdot (b\cdot c) = a\cdot (c\cdot b)$
\item $g(a_1,\dots,a_n)\cdot b = g(a_1\cdot b,\dots, a_n\cdot b)$
\item $b\cdot g(a_1,\dots,a_n) = b\cdot a_{1}\cdot_{\dots}\cdot a_n $
\end{enumerate}
\end{definition}

The next result makes explicit the relation between P\l onka sums and partition functions:

\begin{theorem}\cite[Thm.~II]{Plo67}\label{th: Teorema di Plonka}
Let $\A$ be an algebra of type $\nu$ with a partition function $\cdot$. The following conditions hold:
\begin{enumerate}
\item $A$ can be partitioned into $\{ A_{i} : i \in I \}$ where any two elements $a, b \in A$ belong to the same component $A_{i}$ exactly when
\[
a= a\cdot b \text{ and }b = b\cdot a.
\]
Moreover, every $A_{i}$ is the universe of a subalgebra $\A_{i}$ of $\A$.
\item The relation $\leq$ on $I$ given by the rule
\[
i \leq j \Longleftrightarrow \text{ there exist }a \in A_{i}, b \in A_{j} \text{ s.t. } b\cdot a =b
\]
is a partial order and $\langle I, \leq \rangle$ is a semilattice. 
\item For all $i,j\in I$ such that $i\leq j$ and $b \in A_{j}$, the map $f_{ij} \colon A_{i}\to A_{j}$, defined by the rule $f_{ij}(x)= x\cdot b$ is a homomorphism. The definition of $f_{ij}$ is independent from the choice of $b$, since $a\cdot b = a\cdot c$, for all $a\in A_i$ and $c\in A_j$.
\item $Y = \langle \langle I, \leq \rangle, \{ \A_{i} \}_{i \in I}, \{ f_{ij} \! : \! i \leq j \}\rangle$ is a direct system of algebras such that $\PL(Y)=\A$.
\end{enumerate}
\end{theorem}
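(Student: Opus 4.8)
The plan is to prove items (1)--(4) in order, each using the earlier ones; the five axioms are tailored so that the Płonka components are exactly the classes of the relation $a\equiv b\iff a=a\cdot b$ and $b=b\cdot a$. For (1), $\equiv$ is reflexive by \textbf{P1}, symmetric by definition, and transitive by substituting $b=b\cdot c$ into $a=a\cdot b$ and applying \textbf{P2} (and symmetrically), so it partitions $A$ into classes $\{A_i\}_{i\in I}$, inside each of which the absorption law $u\cdot v=u$ holds by definition. For closure of $A_i$ under an operation $g$ of arity $n\geq1$ with $a_1,\dots,a_n\in A_i$, \textbf{P4} gives $g(a_1,\dots,a_n)\cdot a_1=g(a_1\cdot a_1,\dots,a_n\cdot a_1)=g(a_1,\dots,a_n)$ and \textbf{P5} gives $a_1\cdot g(a_1,\dots,a_n)=a_1\cdot a_1\cdot a_2\cdots a_n=a_1$, so $g(a_1,\dots,a_n)\equiv a_1$ and $A_i$ is the universe of a subalgebra $\A_i$.

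For (2), I would first check that $i\leq j$ does not depend on the witnesses: from $b\cdot a=b$ with $a\in A_i$, $b\in A_j$, \textbf{P2} and $a\cdot a'=a$ give $b\cdot a'=(b\cdot a)\cdot a'=b\cdot(a\cdot a')=b\cdot a=b$ for every $a'\in A_i$, and then $b'\cdot a'=(b'\cdot b)\cdot a'=b'\cdot(b\cdot a')=b'\cdot b=b'$ for every $b'\in A_j$. Reflexivity of $\leq$ is \textbf{P1}; antisymmetry holds because $i\leq j$ and $j\leq i$ make common witnesses $\equiv$-related; transitivity is one application of \textbf{P2}. The decisive point, which yields the joins, is that for $a\in A_i$, $b\in A_j$ the element $a\cdot b$ lies in the component $A_k$ with $k=i\vee j$: by \textbf{P2}, \textbf{P3}, \textbf{P1}, $(a\cdot b)\cdot a=a\cdot(b\cdot a)=a\cdot(a\cdot b)=(a\cdot a)\cdot b=a\cdot b$, so $i\leq k$; symmetrically $(a\cdot b)\cdot b=a\cdot b$, so $j\leq k$; and if $i,j\leq l$ with $d\in A_l$, then $d\cdot(a\cdot b)=(d\cdot a)\cdot b=d\cdot b=d$, so $k\leq l$. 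Hence $\langle I,\leq\rangle$ is a semilattice (and finite joins exist).

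For (3), set $f_{ij}(x)=x\cdot b$ for $x\in A_i$, $b\in A_j$, $i\leq j$. Since $i\leq j$ forces $b\cdot x=b$, one gets $(x\cdot b)\cdot b=x\cdot b$ and $b\cdot(x\cdot b)=(b\cdot x)\cdot b=b$, so $f_{ij}(x)\in A_j$; and for $b,c\in A_j$, absorption in $A_j$ with \textbf{P2}, \textbf{P3} gives $x\cdot b=(x\cdot b)\cdot c=x\cdot(b\cdot c)=x\cdot(c\cdot b)=(x\cdot c)\cdot b=x\cdot c$, so $f_{ij}$ is independent of $b$. It is a homomorphism by \textbf{P4}, equals the identity when $i=j$ by absorption, and satisfies $f_{ik}=f_{jk}\circ f_{ij}$ for $i\leq j\leq k$ by \textbf{P2} and this independence. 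Thus $Y=\langle\langle I,\leq\rangle,\{\A_i\},\{f_{ij}\}\rangle$ is a direct system of algebras.

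It remains to prove $\PL(Y)=\A$. The universes coincide, so only the basic operations must be matched. Fix $g$ of arity $n\geq1$ and $a_l\in A_{i_l}$, put $j=i_1\vee\dots\vee i_n$, and let $k$ be the component of $g^{\A}(a_1,\dots,a_n)$. By \textbf{P5} with $b=a_1$, $a_1\cdot g^{\A}(a_1,\dots,a_n)=a_1\cdot a_1\cdot a_2\cdots a_n=a_1\cdot a_2\cdots a_n$, which lies in $A_j$ by iterating (2); since by (2) this product also lies in $A_{i_1\vee k}$, disjointness of the components gives $i_1\vee k=j$, so $k\leq j$. Next, by \textbf{P5} with $b=g^{\A}(a_1,\dots,a_n)$ and \textbf{P1}, $g^{\A}(a_1,\dots,a_n)=g^{\A}(a_1,\dots,a_n)\cdot a_1\cdots a_n=g^{\A}(a_1,\dots,a_n)\cdot(a_1\cdots a_n)=f_{kj}\big(g^{\A}(a_1,\dots,a_n)\big)\in A_j$, forcing $k=j$. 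Hence, taking any $b\in A_j$ and using that $\A_j$ is a subalgebra of $\A$ together with \textbf{P4}, $g^{\PL(Y)}(a_1,\dots,a_n)=g^{\A_j}(f_{i_1j}(a_1),\dots,f_{i_nj}(a_n))=g^{\A}(a_1\cdot b,\dots,a_n\cdot b)=g^{\A}(a_1,\dots,a_n)\cdot b=g^{\A}(a_1,\dots,a_n)$, which proves (4). The step I expect to be most delicate is the representative-independence bookkeeping in (2) and (3), above all the claim that $a\cdot b$ always falls into the join component $A_{i\vee j}$: this is what makes $\langle I,\leq\rangle$ a semilattice and underlies everything downstream, and it is where the combination of \textbf{P1}--\textbf{P3} is genuinely exploited. (Side remark: \textbf{P4}--\textbf{P5} are imposed only for arities $\geq1$, so nullary symbols, if the type has any, are not constrained by them and would need a separate argument, consistently with the Płonka sum above being defined through joins $i_1\vee\dots\vee i_n$ and hence tacitly for constant-free types.)
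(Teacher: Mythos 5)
Your proof is correct, but there is nothing in the paper to compare it with: the statement is quoted verbatim as Theorem~II of P\l onka's 1967 paper and used as a black box, so the paper gives no proof of its own. Your reconstruction is essentially P\l onka's classical argument, and every step checks out: the equivalence via mutual absorption with transitivity from \textbf{P2}, the witness-independence of $\leq$, the key lemma that $a\cdot b$ lands in the join component $A_{i\vee j}$ (which is indeed where \textbf{P1}--\textbf{P3} do the real work), and the final identification, via \textbf{P5} and disjointness of the components, of the component of $g^{\A}(a_1,\dots,a_n)$ with $i_1\vee\dots\vee i_n$, after which \textbf{P4} and absorption give $g^{\PL(Y)}=g^{\A}$. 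Your side remark about nullary symbols is also apt: the partition-function axioms and the P\l onka sum construction as stated only make sense for constant-free types, which is the implicit setting of the cited theorem.
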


It is worth remarking that the construction of Plonka sums preserves the validity of the so-called \textit{regular identities}, i.e. identities of the form $ \varphi \thickapprox \psi $ such that $\Var(\varphi) = \Var(\psi)$.

\section{Matrix models for logics of variable inclusion}

In this section we review how to generalize the machinery of P\l onka sums up to logical matrices, in order to provide a complete matrix semantics for an arbitrary, finitary logic of variable inclusion.

\subsection{Left variable inclusion logics}

The definition of \emph{direct system} of algebras can be extended, as follows, to logical matrices:

\begin{definition}\label{Def:Directed-System-Matrices}(Essentially \cite[Definition 8]{BonzioMorascoPrabaldi})

A \textit{l-direct system} of matrices consists in 
\benroman
\item a semilattice $I = \langle I, \lor\rangle$;
\item a family of matrices $\{ \langle\A_{i},F_{i}\rangle \}_{i \in I}$ with disjoint universes;
\item a homomorphism $f_{ij} \colon \A_{i} \to \A_{j}$ such that $f_{ij}[F_{i}] \subseteq F_{j}$, for every $i, j \in I$ such that $i \leq j$
\eroman
 such that $f_{ii}$ is the identity map for every $i \in I$, and if $i \leq j \leq k$, then $f_{ik} = f_{jk} \circ f_{ij}$.
\end{definition}

Given a $l$-direct system of matrices $X$ as above, we set
\[
\PL(X) \coloneqq \langle \PL(\A_{i})_{i \in I}, \bigcup_{i \in I}F_{i}\rangle.
\]

The matrix $\PL(X)$ is the \textit{P\l onka sum} of the $l$-direct system of matrices $X$. Given a class $\mathsf{M}$ of matrices, we denote by $\PL^l(\mathsf{M})$ the class of all P\l onka sums of $l$-direct systems of matrices in $\mathsf{M}$.

The following Theorem establishes a completeness results for left variable inclusion logics.
\begin{theorem}(\cite[Theorem 14]{BonzioMorascoPrabaldi})\label{thm:completeness for left}
Let $\vdash$ be a logic and $\mathsf{M}$ be a class of matrices containing $\langle \boldsymbol{n}, \{ n \} \rangle$. If $\vdash$ is complete w.r.t.\ $\mathsf{M}$, then $\vdash^{l}$ is complete w.r.t.\ $\PL^l(\mathsf{M})$.
\end{theorem}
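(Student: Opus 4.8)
The statement to be proved is that $\vdash^{l}$ is complete w.r.t.\ $\PL^{l}(\mathsf{M})$, i.e.\ $\vdash^{l} \,=\, \vdash_{\PL^{l}(\mathsf{M})}$, so the plan is to establish the two inclusions separately. The inclusion $\vdash^{l}\leqslant\,\vdash_{\PL^{l}(\mathsf{M})}$ (soundness) is where the P\l onka-sum bookkeeping is needed, while the converse inclusion (completeness) is where the hypothesis $\langle\boldsymbol{n},\{n\}\rangle\in\mathsf{M}$ is used, to produce an ``infectious'' designated element. Throughout I would use the fact that, since $\vdash$ is complete w.r.t.\ $\mathsf{M}$, we have $\vdash\,=\,\bigcap_{m\in\mathsf{M}}\vdash_{m}$, so that every matrix in $\mathsf{M}$ is a model of $\vdash$; I would also use the formula-level version of the P\l onka-sum operation, namely that a complex formula is evaluated by first pushing its arguments along the transition maps up to the join of their indices and then evaluating in that single component.

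For soundness, suppose $\Gamma\vdash^{l}\varphi$ and fix a witnessing $\Delta\subseteq\Gamma$ with $\Var(\Delta)\subseteq\Var(\varphi)$ and $\Delta\vdash\varphi$. Take an arbitrary $\PL(X)\in\PL^{l}(\mathsf{M})$, with $X$ an $l$-direct system over a semilattice $I$ and components $\{\langle\A_{i},F_{i}\rangle\}_{i\in I}\subseteq\mathsf{M}$, and a homomorphism $h\colon\Fm\to\PL(\A_{i})_{i\in I}$ with $h[\Gamma]\subseteq\bigcup_{i\in I}F_{i}$; the goal is $h(\varphi)\in\bigcup_{i\in I}F_{i}$. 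For each variable $x$ let $i_{x}\in I$ be the index with $h(x)\in A_{i_{x}}$, and put $j^{\ast}\coloneqq\bigvee\{i_{x}\colon x\in\Var(\varphi)\}$. Using the formula-level P\l onka identity together with the coherence condition $f_{ik}=f_{jk}\circ f_{ij}$, one checks that for every $\psi$ with $\Var(\psi)\subseteq\Var(\varphi)$ the value $h(\psi)$ lies in some $A_{j_{\psi}}$ with $j_{\psi}\leqslant j^{\ast}$ and that $f_{j_{\psi}j^{\ast}}(h(\psi))=\psi^{\A_{j^{\ast}}}\!\big(f_{i_{x_{1}}j^{\ast}}(h(x_{1})),\dots,f_{i_{x_{n}}j^{\ast}}(h(x_{n}))\big)$ where $\Var(\psi)=\{x_{1},\dots,x_{n}\}$. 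Hence the map $h_{j^{\ast}}\colon\Fm\to\A_{j^{\ast}}$ defined as the homomorphic extension of $x\mapsto f_{i_{x}j^{\ast}}(h(x))$ (on $\Var(\varphi)$; arbitrary elsewhere) satisfies $h_{j^{\ast}}(\psi)=f_{j_{\psi}j^{\ast}}(h(\psi))$ for all such $\psi$, and in particular $h_{j^{\ast}}(\varphi)=h(\varphi)$ because $j_{\varphi}=j^{\ast}$. For $\delta\in\Delta$ we have $h(\delta)\in\big(\bigcup_{i}F_{i}\big)\cap A_{j_{\delta}}=F_{j_{\delta}}$, whence $h_{j^{\ast}}(\delta)=f_{j_{\delta}j^{\ast}}(h(\delta))\in F_{j^{\ast}}$ since transition maps preserve filters; so $h_{j^{\ast}}[\Delta]\subseteq F_{j^{\ast}}$. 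As $\langle\A_{j^{\ast}},F_{j^{\ast}}\rangle\in\mathsf{M}$ is a model of $\vdash$ and $\Delta\vdash\varphi$, we get $h(\varphi)=h_{j^{\ast}}(\varphi)\in F_{j^{\ast}}\subseteq\bigcup_{i\in I}F_{i}$.

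For completeness, suppose $\Gamma\not\vdash^{l}\varphi$ and set $\Gamma^{\ast}\coloneqq\{\gamma\in\Gamma\colon\Var(\gamma)\subseteq\Var(\varphi)\}$. Instantiating the definition of $\vdash^{l}$ with $\Delta=\Gamma^{\ast}$ gives $\Gamma^{\ast}\not\vdash\varphi$, so by completeness of $\vdash$ w.r.t.\ $\mathsf{M}$ there are $\langle\A,F\rangle\in\mathsf{M}$ and a homomorphism $g\colon\Fm\to\A$ with $g[\Gamma^{\ast}]\subseteq F$ and $g(\varphi)\notin F$. Now build the $l$-direct system $X$ over the two-element chain $0<1$ by putting $\langle\A_{0},F_{0}\rangle\coloneqq\langle\A,F\rangle$, $\langle\A_{1},F_{1}\rangle\coloneqq\langle\boldsymbol{n},\{n\}\rangle$, and taking $f_{01}\colon\A\to\boldsymbol{n}$ to be the unique map onto the trivial algebra (automatically a homomorphism with $f_{01}[F]\subseteq\{n\}$). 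In $\PL(X)$ every basic operation, hence every formula, returns $n$ as soon as one of its arguments equals $n$, and $n$ is designated. Define $h\colon\Fm\to\PL(\A_{i})_{i\in\{0,1\}}$ by $h(x)\coloneqq g(x)$ for $x\in\Var(\varphi)$ and $h(x)\coloneqq n$ otherwise. If $\gamma\in\Gamma$ has a variable outside $\Var(\varphi)$, then $h(\gamma)=n\in F_{1}$; if $\gamma\in\Gamma^{\ast}$, then $h(\gamma)=g(\gamma)\in F$; in either case $h(\gamma)\in F_{0}\cup F_{1}$. Finally $h(\varphi)=g(\varphi)\notin F$, and $A$ and $\{n\}$ are disjoint, so $h(\varphi)\notin F_{0}\cup F_{1}$. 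Thus $\PL(X)\in\PL^{l}(\mathsf{M})$ together with $h$ witnesses $\Gamma\not\vdash_{\PL^{l}(\mathsf{M})}\varphi$, completing the argument.

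The main obstacle is the soundness step: one has to verify carefully that the ``contracted'' assignment $h_{j^{\ast}}$ really is a homomorphism into the single component $\A_{j^{\ast}}$ and computes the same values as $h$ on every formula whose variables occur among those of $\varphi$ — this is exactly where the formula-level P\l onka identity and the coherence condition $f_{ik}=f_{jk}\circ f_{ij}$ are indispensable, and it is the point at which the left-variable-inclusion constraint $\Var(\Delta)\subseteq\Var(\varphi)$ is actually used (it guarantees $j_{\delta}\leqslant j^{\ast}$ for every $\delta\in\Delta$ and $j_{\varphi}=j^{\ast}$). A minor loose end is the degenerate case $\Var(\varphi)=\emptyset$ (formulas built solely from constants), which should be dispatched separately according to the convention adopted for nullary operation symbols in P\l onka sums.
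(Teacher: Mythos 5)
Your proof is correct and coincides in essentials with the argument behind the cited result (the paper itself imports this theorem from \cite[Theorem 14]{BonzioMorascoPrabaldi} without reproving it): soundness by collapsing the evaluation into the single component indexed by the join $j^{\ast}$ of the indices of $h(x)$ for $x\in\Var(\varphi)$, using the formula-level P\l onka identity, the coherence of the transition maps and filter preservation; completeness by gluing the countermodel for $\Gamma^{\ast}\nvdash\varphi$ below a copy of $\langle\boldsymbol{n},\{n\}\rangle$ on the two-element chain and sending the variables outside $\Var(\varphi)$ to the infectious designated element $n$. The only points to tidy are the ones you already flag, plus disjointness of universes (replace $\langle\boldsymbol{n},\{n\}\rangle$ by an isomorphic copy if $n\in A$); the degenerate case $\Var(\varphi)=\emptyset$ does not arise, since P\l onka sums are taken over languages without nullary operation symbols, so every formula contains a variable.
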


\begin{example}\label{exa: pwk as left of cl}

As paradigmatic application of the above theorem, consider  the case in which $\vdash=\vdash_\CL$. Consider the class of matrices $\{\langle\B_2,1\rangle,\langle\mathbf{n},n\rangle\}$, where $\B_2$ is the two-element Boolean algebra, and $\mathbf{n}$ is the trivial algebra. Theorem \ref{thm:completeness for left} states that the following matrix is complete for $\vdash_\PWK$
\begin{center}

\begin{tikzpicture}

  \node (n) at (-1.5,5) {\circled{$n$}};
    \node (1) at (-1.5,3.5) {\circled{$1$}};
    \node (0) at (-1.5,2) {$0$};
         \draw (0) -- (1); 
 \draw (1) -- (n);

\end{tikzpicture}
\end{center}

\end{example}
The following definition plays a central role in the algebraic study of logics of left variable inclusion.

\begin{definition}\label{def: logic with l p-func.}
A logic $\vdash$ has a \emph{l-partition function} if there is a formula $x\cdot y$, in which the variables $x$ and $y$ really occur, such that $x \vdash x\cdot y$ and the equations $\mathbf{P1.}, \dots,  \mathbf{P5.}$ in Definition \ref{def: partition function} hold in $\Alg(\vdash)$ for every $n$-ary connective $f$. In this case, $x \cdot y$ is a \emph{l-partition function} for $\vdash$.
\end{definition}

\begin{remark}\label{exa: left p function}
Observe that logics with a $l$-partition function abounds in the literature (see \cite{BonzioMorascoPrabaldi}). For instance, the term $x\land(x\lor y)$ is a $l$-partition function for the above mentioned logic $\PWK$.
\end{remark}
\subsection{Right variable inclusion logics}

\emph{Right variable inclusion logics}, also called \emph{containment logics} \cite{Parry}, are defined as follows:

\begin{definition}
Let $\vdash$ be a logic, $\vdash^{r}$ is the logic defined as 

\[
\Gamma\vdash^{r}\varphi\iff \left\{ \begin{array}{ll}
\Gamma\vdash\varphi  \ \text{and} \ \Var(\varphi)\subseteq\Var(\Gamma)& \text{or}\\
\Sigma(x)\subseteq\Gamma & \\
  \end{array} \right.  
\]

where $\Sigma(x)$ is an antitheorem of $\vdash$.
\end{definition}

Another possible way of extending the notion of direct system of algebras (see Definition \ref{Def:Directed system of algebras}) to logical matrices is the following:
\begin{definition}\label{def r direct sys}(Essentially \cite[Definition 13]{BonzioPraBaldi}\label{Def:Directed-System-Matrices}

A \textit{r-direct system} of matrices consists in 
\benroman
\item A semilattice $I = \langle I, \lor\rangle$.
\item A family of matrices $\{ \langle\A_{i},F_{i}\rangle : i \in I \}$ such that \\ $I^{+}\coloneqq\{i\in I:\langle\A_{i},F_{i}\rangle:F_{i}\neq\emptyset\} $ is a  sub-semilattice of $I$.
\item a homomorphism $f_{ij} \colon \A_{i} \to \A_{j}$, for every $i, j \in I$ such that $i \leq j$, satisfying also that: 
\begin{itemize}
\item $f_{ii}$ is the identity map for every $i \in I$;
\item if $i \leq j \leq k$, then $f_{ik} = f_{jk} \circ f_{ij}$;
\item if $F_{j}\neq\emptyset$ then $ f_{ij}^{-1}[F_{j}]= F_{i}$.
\end{itemize}
\eroman
\end{definition}

Observe that the just defined notion of $r$-direct system differs from the definition of $l$-direct system above.  

Given a $r$-direct system of matrices $X$, a new matrix is defined as
\[
\PL(X) \coloneqq \langle \PL(\A_{i})_{i \in I}, \bigcup_{i \in I}F_{i}\rangle.
\]
\vspace{5pt}

\noindent
 Given a class $\mathsf{M}$ of matrices, $\PL^r(\mathsf{M})$ will denote the class of all P\l onka sums of $r$-directed systems of matrices in $\mathsf{M}$.

Given a logic $\vdash$ which is complete with respect to a class $\mathsf{M}$ of matrices, we set  $\mathsf{M^{\emptyset}}\coloneqq\mathsf{M}\cup\langle\A,\emptyset\rangle$, for any arbitrary $\A\in\Alg(\vdash)$.
The result which provides a complete matrix semantics for an arbitrary finitary right variable inclusion logic is the following
\begin{theorem}(\cite[Theorem 19]{BonzioPraBaldi}\label{completeness right}
Let $\vdash$ be a logic which is complete w.r.t. a class of non trivial matrices $\mathsf{M}$. Then $\vdash^{r}$ is complete w.r.t. $\PL^r(\mathsf{M^{\emptyset}})$. 
\end{theorem}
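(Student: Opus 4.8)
The plan is to establish the identity $\vdash^{r}\,=\,\vdash_{\PL^r(\mathsf{M^{\emptyset}})}$ as two inclusions: \emph{soundness}, that every P\l onka sum of an $r$-direct system of matrices from $\mathsf{M^{\emptyset}}$ is a matrix model of $\vdash^{r}$; and \emph{completeness}, that whenever $\Gamma\nvdash^{r}\varphi$ there is such a P\l onka sum refuting $\Gamma\vdash^{r}\varphi$. Two facts recorded earlier are used throughout. Since $\vdash\,=\,\vdash_{\mathsf{M}}$, every member of $\mathsf{M}$ — hence every component $\langle\A_{i},F_{i}\rangle$ with $F_{i}\neq\emptyset$ of an $r$-direct system taken from $\mathsf{M^{\emptyset}}$ — is a matrix model of $\vdash$. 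And, as observed after the definition of P\l onka sums, for a homomorphism $h\colon\Fm\to\PL(\A_{i})_{i\in I}$ and a formula $\psi$, the value $h(\psi)$ lies in the component indexed by $[\psi]:=\bigvee\{[x]:x\in\Var(\psi)\}$, where $[x]$ is the index of the component of $h(x)$; moreover, for $[\psi]\leqs k$, the element $f_{[\psi]\,k}(h(\psi))$ is obtained by evaluating $\psi$ on the $f_{[x]\,k}(h(x))$. I also use that $\vdash$ is finitary.

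For soundness, fix such a P\l onka sum with filter $F:=\bigcup_{i}F_{i}$, a homomorphism $h$ with $h[\Gamma]\subseteq F$, and an instance $\Gamma\vdash^{r}\varphi$, and argue $h(\varphi)\in F$ by cases on the witnessing clause of the definition of $\vdash^{r}$. If $\Sigma(x)\subseteq\Gamma$ for an antitheorem $\Sigma(x)$ — which by Remark~\ref{rem: inconsistency term in var x only} may be taken finite and in the single variable $x$ — then $h[\Sigma(x)]$ lies entirely in the component $A_{i}$ of $h(x)$, so $h[\Sigma(x)]\subseteq F_{i}$ and in particular $F_{i}\neq\emptyset$; hence $\langle\A_{i},F_{i}\rangle\in\mathsf{M}$ is a model of $\vdash$ in which the antitheorem $\Sigma(x)$ is satisfiable (by $x\mapsto h(x)$), and letting the remaining variables range over $A_{i}$ then forces $F_{i}=A_{i}$, contradicting the non-triviality of the matrices in $\mathsf{M}$; so no such $h$ exists and this clause is vacuously handled. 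If instead $\Gamma\vdash\varphi$ with $\Var(\varphi)\subseteq\Var(\Gamma)$, first reduce to a finite premise set: by finitarity pick finite $\Gamma_{0}\subseteq\Gamma$ with $\Gamma_{0}\vdash\varphi$ and enlarge it inside $\Gamma$ by one premise carrying each variable of $\varphi$, obtaining finite $\Gamma'\subseteq\Gamma$ with $\Gamma'\vdash\varphi$ and $\Var(\varphi)\subseteq\Var(\Gamma')$; since $h[\Gamma']\subseteq F$, it suffices to treat $\Gamma'$. Now $\ell:=\bigvee_{\gamma\in\Gamma'}[\gamma]$ is a finite join of elements of $I^{+}$, hence $\ell\in I^{+}$ (a subsemilattice), and $[\varphi]\leqs\ell$ because $\Var(\varphi)\subseteq\Var(\Gamma')$. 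Define $h_{\ell}\colon\Fm\to\A_{\ell}$ by $y\mapsto f_{[y]\,\ell}(h(y))$ for each variable $y$ of $\Gamma'$ and arbitrarily elsewhere; by the composition law $f_{[\gamma]\,\ell}\circ f_{[y]\,[\gamma]}=f_{[y]\,\ell}$ one gets $h_{\ell}(\gamma)=f_{[\gamma]\,\ell}(h(\gamma))$, which lies in $F_{\ell}$ because $F_{\ell}\neq\emptyset$ makes $f_{[\gamma]\,\ell}$ carry $F_{[\gamma]}$ into $F_{\ell}$. As $\langle\A_{\ell},F_{\ell}\rangle$ is a model of $\vdash$ and $\Gamma'\vdash\varphi$, we obtain $h_{\ell}(\varphi)\in F_{\ell}$; but $h_{\ell}(\varphi)=f_{[\varphi]\,\ell}(h(\varphi))$, and the characteristic condition $f_{[\varphi]\,\ell}^{-1}[F_{\ell}]=F_{[\varphi]}$ (available since $F_{\ell}\neq\emptyset$) then forces $h(\varphi)\in F_{[\varphi]}\subseteq F$.

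For completeness, assume $\Gamma\nvdash^{r}\varphi$; negating the definition, $\Gamma$ contains no antitheorem, and either $\Gamma\nvdash\varphi$ or $\Var(\varphi)\not\subseteq\Var(\Gamma)$. If $\Gamma\nvdash\varphi$, completeness of $\vdash$ w.r.t.\ $\mathsf{M}$ yields $\langle\A,G\rangle\in\mathsf{M}$ and $h$ with $h[\Gamma]\subseteq G$ and $h(\varphi)\notin G$; viewing $\langle\A,G\rangle$ as the P\l onka sum of the one-component $r$-direct system it carries — which lies in $\PL^r(\mathsf{M^{\emptyset}})$ since $\mathsf{M}\subseteq\mathsf{M^{\emptyset}}$ — settles this. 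Otherwise, fix $z\in\Var(\varphi)\setminus\Var(\Gamma)$. Because $\Gamma$ contains no antitheorem and $\vdash$ is finitary, $\Gamma$ is not explosive — an explosive set would, by finitarity and substitution invariance, contain a finite antitheorem — so $\Gamma$ is satisfiable in some $\langle\A_{1},F_{1}\rangle\in\mathsf{M}$, say by $h_{1}$. Form the $r$-direct system over the two-element chain $i_{1}<i_{0}$ with $\langle\A_{1},F_{1}\rangle$ at $i_{1}$ and an empty-filter matrix $\langle\A,\emptyset\rangle$ of $\mathsf{M^{\emptyset}}$ at the top $i_{0}$, together with some homomorphism $\A_{1}\to\A$ as transition map (available for a suitable choice of the empty-filter matrix, e.g.\ one whose algebra is a homomorphic image of $\A_{1}$); this is a legitimate $r$-direct system because $I^{+}=\{i_{1}\}$ is a subsemilattice and the condition on $f^{-1}$ is vacuous at the empty component. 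In its P\l onka sum let $h$ agree with $h_{1}$ on $\Var(\Gamma)$, send $z$ into $A_{i_{0}}$, and send the remaining variables of $\varphi$ into $A_{i_{1}}$. Then each $\gamma\in\Gamma$ has $[\gamma]=i_{1}$, so $h(\gamma)=h_{1}(\gamma)\in F_{1}$; whereas $z\in\Var(\varphi)$ and $i_{0}$ being the top of the chain give $[\varphi]=i_{0}$, so $h(\varphi)\in A_{i_{0}}$, which is disjoint from the filter $\bigcup_{i}F_{i}=F_{1}$. Hence this P\l onka sum refutes $\Gamma\vdash^{r}\varphi$, completing the argument.

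The main obstacle is the second clause of the soundness half: after reducing to a finite $\Gamma'$ one must see that all the relevant values sit below a single index $\ell\in I^{+}$ and then transport the designated status of $h_{\ell}(\varphi)$ \emph{downward} along the semilattice to $h(\varphi)$ — precisely the point where the asymmetric defining clause $f_{ij}^{-1}[F_{j}]=F_{i}$ of an $r$-direct system (in contrast to the mere inclusion $f_{ij}[F_{i}]\subseteq F_{j}$ of an $l$-direct system) is indispensable, and without it the P\l onka sums would fail to be sound for $\vdash^{r}$. A lesser but instructive point is that the $\Sigma(x)\subseteq\Gamma$ clause of soundness is vacuous exactly because $\mathsf{M}$ consists of non-trivial matrices, and that the empty-filter matrix of $\mathsf{M^{\emptyset}}$ is needed only for completeness, to host the variable of $\varphi$ missing from $\Gamma$.
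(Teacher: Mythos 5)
The paper does not actually prove this statement: it is imported verbatim from \cite[Theorem 19]{BonzioPraBaldi}, so there is no in-paper proof to compare against; your argument reconstructs the standard one for that result, and it is correct in its essentials (soundness by collecting a finite premise set, pushing its values to the join index $\ell\in I^{+}$, and pulling the designation of $h(\varphi)$ back down through the defining condition $f_{ij}^{-1}[F_{j}]=F_{i}$; completeness by a one-component system when $\Gamma\nvdash\varphi$, and by a two-component system with an empty-filter top absorbing the variable of $\varphi$ missing from $\Gamma$ otherwise). Two points deserve tightening. First, for the top component you should take the empty-filter matrix over the trivial algebra $\langle\mathbf{n},\emptyset\rangle$, which lies in $\Alg(\vdash)$ and receives a homomorphism from every $\A_{1}$; your suggestion of ``a homomorphic image of $\A_{1}$'' need not belong to $\Alg(\vdash)$, and this is exactly the choice made in the paper's own Remark \ref{rem: complete for rl lr} and in the $\mathsf{B}_{3}$ example. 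Second, the inference ``$\Gamma$ contains no antitheorem, hence $\Gamma$ is satisfiable in some member of $\mathsf{M}$'' should be made explicit: if $\Gamma$ were unsatisfiable in every matrix of $\mathsf{M}$, then so would be every substitution instance $\sigma[\Gamma]$ (compose valuations with $\sigma$), whence $\sigma[\Gamma]\vdash_{\mathsf{M}}\psi$ vacuously for all $\sigma,\psi$ and $\Gamma$ would itself be an antitheorem of $\vdash\,=\,\vdash_{\mathsf{M}}$; this also shows your detour through ``explosive sets contain a finite antitheorem'' is only needed if one insists on the single-variable reading of $\Sigma(x)\subseteq\Gamma$, and under the general reading the antitheorem clause of soundness goes through by the same join-and-push argument even for antitheorems in several variables.
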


\begin{example}
Recall the situation of Example \ref{exa: pwk as left of cl}, and consider  the case in which $\vdash=\vdash_\CL$. Consider the class of matrices $\{\langle\B_2,1\rangle,\langle\mathbf{n},n\rangle\}$, where $\B_2$ is the two-element Boolean algebra, and $\mathbf{n}$ is the trivial algebra. Theorem \ref{completeness right} states that the following matrix is complete for $\vdash_{\mathsf{B}_3}$
\begin{center}

\begin{tikzpicture}

  \node (n) at (-1.5,5) {$n$};
    \node (1) at (-1.5,3.5) {\circled{$1$}};
    \node (0) at (-1.5,2) {$0$};
         \draw (0) -- (1); 
 \draw (1) -- (n);

\end{tikzpicture}
\end{center}
\end{example}

\begin{definition}\label{def: logic with p-func.}
A logic $\vdash$ has a \emph{r-partition function} if there is a formula $x\ast y$, in which the variables $x$ and $y$ really occur, such that 
\benroman
\item$x,y\vdash x\ast y$, 
\item$x\ast y \vdash x$, 
\eroman
and the term operation $\ast$ is a partition function in every $\A\in\Alg(\vdash)$.

\end{definition}

\begin{remark}\label{rem: complete for rl lr}

Observe that, according with Theorem \ref{thm:completeness for left} and Theorem \ref{completeness right}, given $\mathsf{M^{\vdash}}$ a complete class of matrices for $\vdash$ containing $\langle\mathbf{n},n\rangle$ as only trivial matrix, it is always possible to obtain  a complete class of non trivial matrices $\mathsf{M}$ for $\vdash^{l}$, and  a complete class of  matrices $\mathsf{M}^{\star}$ for $\vdash^{r}$ containing $\langle\mathbf{n},n\rangle$ as only trivial matrix. Moreover, by applying again the mentioned theorems  to $\mathsf{M}$ and $\mathsf{M^{\star}}$ we have that $\PL^r(\mathsf{M}\cup\langle\mathbf{n},\emptyset\rangle)$ is complete for $\vdash^{lr}$ while $\PL^l(\mathsf{M^{\star}})$ is complete for $\vdash^{rl}$.
\end{remark}

In what follows, we write $\bullet$ to denote any (possibly empty) sequence of elements among $\{l,r\}$. So, $\vdash^{\bullet}$ will denote an arbitrary  logic obtained by replacing $\bullet$ with a sequence of elements among $\{l,r\}$.  We denote the length of a sequence $\bullet$ as $L(\bullet)$. 

The reading of a sequence $\bullet$ is from left to right. So, if $\bullet=u_{1}\dots u_{n}$ with ($u_{i}\in\{l,r\}$ for $1\leq i\leq n$) the logic $\vdash^{\bullet}$ is the logic obtained by applying the definition of $u_{m}$ to the logic $\vdash^{u_{1}\dots u_{m-1}}$ for every $1\leq m\leq n$.

An immediate  consequence of Remark \ref{rem: complete for rl lr} is that  $\vdash^{\bullet l}\geq\vdash^{\bullet lr}$ and $\vdash^{\bullet r}\geq\vdash^{\bullet rl}$. This fact will be useful for  the next sections.
From now on, unless stated otherwise, we assume that $\vdash$ is a finitary logic, and that it possesses a binary term $\pi(x,y)$ that behaves as a $r$-partition function for $\vdash^{r}$ and as  a $l$-partition function for $\vdash^{l}$. Observe that a great amount of logics share this feature. For instance, the term $\pi(x,y)=x\land(x\lor y)$,  is a partition function for classical and intutionistic logic, as well as for every substrucural and modal logic.

\section{Logics without antitheorems}\label{sec:sublogics without antitheorems}

In this section, given an antitheorem-free logic $\vdash$, we determine the number of the sublogics of variable inclusion of $\vdash$. Then, we investigate their position within the lattice of sublogics of $\vdash$.

\begin{lemma}\label{lemma generale su rl}
Let $\vdash$ be a logic without antitheorems. If \ $\Gamma\vdash^{rl\bullet^{\prime}}\varphi$, then there exists $\Delta\subseteq\Gamma$ such that $\Delta\vdash\varphi$ and $\Var(\Delta)=\Var(\varphi)$.
\end{lemma}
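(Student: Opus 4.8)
The plan is to induct on the length $L(\bullet')$ of the trailing sequence, peeling off one operator at a time from the \emph{outside}. For the base case $\bullet'$ empty we must show: if $\Gamma \vdash^{rl} \varphi$ then there is $\Delta \subseteq \Gamma$ with $\Delta \vdash \varphi$ and $\Var(\Delta) = \Var(\varphi)$. Unfolding the definition of the left companion applied to $\vdash^r$, $\Gamma \vdash^{rl}\varphi$ gives some $\Delta \subseteq \Gamma$ with $\Var(\Delta) \subseteq \Var(\varphi)$ and $\Delta \vdash^r \varphi$. Now unfold $\vdash^r$: since $\vdash$ (hence $\vdash^r$, which is a sublogic) has no antitheorem, the disjunct ``$\Sigma(x) \subseteq \Delta$'' is vacuous, so we must be in the first case, i.e.\ $\Delta \vdash \varphi$ and $\Var(\varphi) \subseteq \Var(\Delta)$. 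Combining the two variable inclusions gives $\Var(\Delta) = \Var(\varphi)$, as desired.

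For the inductive step, suppose the claim holds for all sequences shorter than $\bullet'$ and write $\bullet' = \bullet'' u$ with $u \in \{l,r\}$, so $\vdash^{rl\bullet'} = (\vdash^{rl\bullet''})^{u}$. If $u = l$: from $\Gamma \vdash^{rl\bullet''l}\varphi$ we get $\Delta \subseteq \Gamma$ with $\Var(\Delta)\subseteq\Var(\varphi)$ and $\Delta \vdash^{rl\bullet''}\varphi$; by the induction hypothesis there is $\Delta' \subseteq \Delta \subseteq \Gamma$ with $\Delta' \vdash \varphi$ and $\Var(\Delta') = \Var(\varphi)$, which is what we need. If $u = r$: from $\Gamma \vdash^{rl\bullet''r}\varphi$ we again use that $\vdash^{rl\bullet''}$ has no antitheorem (it is a sublogic of the antitheorem-free $\vdash$), so the antitheorem disjunct cannot apply; hence $\Gamma \vdash^{rl\bullet''}\varphi$ and $\Var(\varphi)\subseteq\Var(\Gamma)$. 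Apply the induction hypothesis to $\Gamma \vdash^{rl\bullet''}\varphi$ to obtain $\Delta \subseteq \Gamma$ with $\Delta \vdash\varphi$ and $\Var(\Delta)=\Var(\varphi)$, finishing the step.

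The one point that needs care — and the closest thing to an obstacle — is the bookkeeping claim that every logic of the form $\vdash^{\bullet}$ built from an antitheorem-free $\vdash$ is itself antitheorem-free. This is what licenses discarding the $\Sigma(x)\subseteq\Gamma$ branch each time an $r$ is peeled off. But it follows immediately from the fact, noted in the excerpt, that each $\vdash^{\bullet}$ is a sublogic of $\vdash$: an antitheorem of a sublogic would also be an antitheorem of $\vdash$ (if $\sigma[\Sigma]\vdash^{\bullet}\varphi$ for all $\sigma,\varphi$, then $\sigma[\Sigma]\vdash\varphi$ for all $\sigma,\varphi$), contradicting the hypothesis. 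I would state this as a one-line observation before the induction. Everything else is a mechanical unwinding of the two defining equivalences together with transitivity of $\subseteq$ for subsets of $\Gamma$; finitarity of $\vdash$ is not actually needed for this particular lemma, though it is a standing assumption.
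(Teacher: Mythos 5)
Your proof is correct and follows essentially the same route as the paper's: induction on $L(\bullet')$, peeling off the outermost $l$ or $r$, unfolding the defining equivalences, and discarding the antitheorem branch of each $r$-step because the intermediate logics inherit antitheorem-freeness from $\vdash$. Your explicit one-line observation that a sublogic of an antitheorem-free logic is antitheorem-free is a welcome clarification of a point the paper's proof uses only implicitly.
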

\begin{proof}
By induction on the length of $\bullet^{\prime}$.\\
(B). If $L(\bullet^{\prime})=0$ the proof is immediate, so it remains to consider $L(\bullet^{\prime})=1$. There are cases: (a) $\bullet^{\prime}=l$ or (b): $\bullet^{\prime}=r$. if (a) then $\Gamma\vdash^{\bullet rll}\varphi$ implies $\Gamma\vdash^{\bullet rl}\varphi$, so there exists $\Delta\subseteq\Gamma$ such that $\Delta\vdash^{\bullet r}\varphi$ and $\Var(\Delta)\subseteq\Var(\varphi)$. This implies $\Delta\vdash^{\bullet}\varphi$ and $\Var(\varphi)\subseteq\Var(\Delta)$. Now, suppose $\Delta\nvdash\varphi$. This implies $\Delta\nvdash^{l}\varphi$ and $\Delta\nvdash^{r}\varphi$, which is in contradiction with the fact that $\Delta\vdash^{\bullet}\varphi$. So $\Delta\vdash\varphi$.
The case of (b) is analogous.\\

(IND). Suppose the statement holds for $L(\bullet^{\prime})=n$ and consider $L(\bullet^{\prime})=n+1$. That is, $\bullet^{\prime}$ can be of the following forms: (a) $\bullet^{\prime}=s\cup\{l\}$ with $L(s)=n$, or (b) $\bullet^{\prime}=s\cup\{r\}$ with $L(s)=n$.
In the case of (a), as $\Gamma\vdash^{\bullet rl\bullet^{\prime}}\varphi$ we have that there exists $\Delta\subseteq\Gamma$ such that $\Delta\vdash^{\bullet rls}\varphi$ and $\Var(\Delta)\subseteq\Var(\varphi)$. As $L(s)=n$, by inductive hypothesis there exists $\Sigma\subseteq\Delta$ such that $\Sigma\vdash\varphi$ and $\Var(\Sigma)=\Var(\varphi)$. Observing that $\Sigma\subseteq\Delta\subseteq\Gamma$ we obtain our conclusion.
The case for (b) can be proved with the same strategy.

\end{proof}

The previous Lemma \ref{lemma generale su rl} has the following immediate consequences:

\begin{corollary}\label{cor: rl  bot senza antiteoremi}
Let $\vdash$ be a logic without antitheorems. Then
\benroman
\item   If \ $\Gamma\vdash^{\bullet rl\bullet^{\prime}}\varphi$ then there exists $\Delta\subseteq\Gamma$ such that $\Delta\vdash\varphi$ and $\Var(\Delta)=\Var(\varphi)$
\item $\vdash^{rl}{\leq}\vdash^{\bullet}$
\eroman
\end{corollary}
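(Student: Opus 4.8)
The plan is to deduce both items from Lemma~\ref{lemma generale su rl} together with two easy observations. The first is that each variable inclusion companion of a logic is a sublogic of it; iterating, $\vdash^{\bullet}\leq\vdash$, and more generally every logic of the form $\vdash^{\bullet}$ is a sublogic of $\vdash$, hence is itself antitheorem-free, because any antitheorem of a sublogic is an antitheorem of the larger logic. The second is that $(-)^{l}$ is monotone with respect to $\leq$ straight from its definition, and that $(-)^{r}$ is monotone on antitheorem-free logics too, since for such a logic $\vdash_{0}$ the relation $\vdash_{0}^{r}$ holds precisely when $\Gamma\vdash_{0}\varphi$ and $\Var(\varphi)\subseteq\Var(\Gamma)$; composing, $(-)^{s}$ is monotone on antitheorem-free logics for every sequence $s$.

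For item (i) I would first note $\vdash^{\bullet rl\bullet^{\prime}}\leq\vdash^{rl\bullet^{\prime}}$: indeed $\vdash^{\bullet}\leq\vdash$, and applying $(-)^{rl\bullet^{\prime}}$ to both sides preserves the inequality by the monotonicity just recalled (all logics occurring along the way being antitheorem-free). Therefore $\Gamma\vdash^{\bullet rl\bullet^{\prime}}\varphi$ entails $\Gamma\vdash^{rl\bullet^{\prime}}\varphi$, and Lemma~\ref{lemma generale su rl} supplies the required $\Delta\subseteq\Gamma$ with $\Delta\vdash\varphi$ and $\Var(\Delta)=\Var(\varphi)$.

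For item (ii) fix an arbitrary sequence $\bullet$; the goal is $\Gamma\vdash^{rl}\varphi\Rightarrow\Gamma\vdash^{\bullet}\varphi$. Unwinding the definitions of $(-)^{r}$ and $(-)^{l}$ in the antitheorem-free setting, $\Gamma\vdash^{rl}\varphi$ yields some $\Delta\subseteq\Gamma$ with $\Delta\vdash\varphi$ and $\Var(\Delta)=\Var(\varphi)$ (equivalently, this is item (i) with $\bullet$ and $\bullet^{\prime}$ empty). It then suffices to prove the sub-claim that $\Delta\vdash\varphi$ and $\Var(\Delta)=\Var(\varphi)$ imply $\Delta\vdash^{\bullet}\varphi$, for then monotonicity of the consequence relation $\vdash^{\bullet}$ (weakening premises along $\Delta\subseteq\Gamma$) gives $\Gamma\vdash^{\bullet}\varphi$. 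The sub-claim goes by induction on $L(\bullet)$: for $L(\bullet)=0$ it is the hypothesis $\Delta\vdash\varphi$; otherwise write $\bullet=su$ with $u\in\{l,r\}$ its last entry, so the inductive hypothesis gives $\Delta\vdash^{s}\varphi$. If $u=l$, the subset $\Delta$ of $\Delta$ satisfies $\Var(\Delta)=\Var(\varphi)\subseteq\Var(\varphi)$ together with $\Delta\vdash^{s}\varphi$, whence $\Delta\vdash^{sl}\varphi$; if $u=r$, then $\Delta\vdash^{s}\varphi$ and $\Var(\varphi)=\Var(\Delta)\subseteq\Var(\Delta)$ make the first clause of the definition of $\vdash^{sr}$ apply, whence $\Delta\vdash^{sr}\varphi$. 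Either way $\Delta\vdash^{\bullet}\varphi$.

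I do not expect a real obstacle here: the argument is routine manipulation of the definitions. The one point needing care is tracking that all the auxiliary logics ($\vdash^{\bullet}$ and the intermediate ones) are antitheorem-free, since this is exactly what lets the $r$-operation be captured by its first clause alone, hence be monotone, which is what makes the reduction in (i) legitimate. In the induction for item (ii) one does not even use monotonicity of $(-)^{r}$, only the availability of its first clause, but the antitheorem-free hypothesis is still what guarantees there is nothing further to verify.
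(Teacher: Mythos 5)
Your proposal is correct and matches the paper's intent: the paper states this corollary as an immediate consequence of Lemma~\ref{lemma generale su rl} (whose inductive proof in fact already carries the leading $\bullet$), and your reduction of (i) via monotonicity of the companion operations on antitheorem-free sublogics, plus the induction showing that $\Delta\vdash\varphi$ with $\Var(\Delta)=\Var(\varphi)$ forces $\Delta\vdash^{\bullet}\varphi$ for (ii), is exactly the routine filling-in the author leaves implicit. No gaps; your care in noting that every $\vdash^{\bullet}$ is antitheorem-free (being a sublogic of $\vdash$) is precisely the point that keeps the $r$-clause under control.
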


\begin{remark}\label{rem: l no antiteoremi}
Observe that every logic $\vdash^{\bullet}$ such that $l\in\bullet$ does not have antitheorems. Indeed, let $\vdash$ be a logic and suppose $\Sigma(x)$ is an antitheorem for $\vdash^{l}$. Let $X$ be a $l$-direct system of matrices such that 
\benroman
\item $I=\{i,j\}$ with $i\leq j$
\item $\langle\A_{i},F_{i}\rangle\in\Mod(\vdash)$ be non trivial
\item $\langle \A_{j},F_{j}\rangle$ such that $\A_{j}=\mathbf{n}, F_{j}=n$
\item $f_{ij}:\A_{i}\to\A_{j}$ be the unique homomorphism
\eroman 

Then by Theorem \ref{thm:completeness for left} $\PL(X)=\langle\A,F\rangle$ is a model of $\vdash^l$. The fact that $\Sigma(x)$ is an antitheorem for $\vdash^{l}$ implies $\Sigma(x)\vdash^{l} y$ for $y\in \Var$. Let now $h:\Fm\to\PL(\A_{i})_{i\in I}$ be such that $h(x)=n, h(y)=c$ with $c\in A_{i}\smallsetminus F_{i}$ (note that such $c$ exists as $A_{i}\neq F_{i}$). Then clearly $h(\Sigma(x))\subseteq F$, while $h(y)\notin F$, a contradiction.
\end{remark}

The following theorem characterizes the relation among the sublogics of variable inclusion of an antitheorem-free logic $\vdash$.

\begin{theorem}\label{the: lattice of log of var inc}
Let $\vdash\neq\vdash^{r},\vdash^{l}$ be a logic without antitheorems. The following relations hold:
\benroman
\item $\vdash^{l}{\nleq}\vdash^{r}$ and $\vdash^{r}{\nleq}\vdash^{l}$
\item $\vdash^{l}\cap\vdash^{r}{=}\vdash^{lr}\lneq\vdash^{l},\vdash^{r}$
\item $\vdash^{rl}=\vdash^{rl\bullet}{=}\vdash^{lrl\bullet}$
\eroman
\end{theorem}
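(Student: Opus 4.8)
The statement has three parts; I treat each in turn, and I expect part (iii) to be the real work while parts (i)–(ii) are quick corollaries of the machinery already set up (Corollary~\ref{cor: rl  bot senza antiteoremi}, Remark~\ref{rem: l no antiteoremi}, and the inequalities $\vdash^{\bullet l}\geq\vdash^{\bullet lr}$, $\vdash^{\bullet r}\geq\vdash^{\bullet rl}$ noted after Remark~\ref{rem: complete for rl lr}).

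For (i), I would argue by contradiction. If $\vdash^{l}\leq\vdash^{r}$, then since $\vdash^{l}$ has no antitheorems (Remark~\ref{rem: l no antiteoremi}) and $\vdash^{l}\leq\vdash$ always, I want to derive $\vdash^{l}=\vdash^{r}$ or otherwise collapse one of them to $\vdash$, contradicting the hypothesis $\vdash\neq\vdash^{r},\vdash^{l}$. The cleanest route: use the assumed $r$-/$l$-partition function $\pi(x,y)=x\wedge(x\vee y)$ to exhibit a concrete failure. For instance, $x\vdash^{l}\pi(x,y)$ holds (Definition~\ref{def: logic with l p-func.}), but $\Var(\pi(x,y))=\{x,y\}\not\subseteq\{x\}=\Var(x)$, so if $x\vdash^{r}\pi(x,y)$ then by definition of $\vdash^{r}$ and the absence of antitheorems we'd need $\Var(\pi(x,y))\subseteq\Var(x)$, a contradiction; hence $\vdash^{l}\nleq\vdash^{r}$. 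Symmetrically, $x,y\vdash^{r}\pi(x,y)$ but to have $\{x,y\}\vdash^{l}\pi(x,y)$ one needs $\Delta\subseteq\{x,y\}$ with $\Var(\Delta)\subseteq\{x,y\}$ and $\Delta\vdash\pi(x,y)$; the candidate subsets are $\{x\},\{y\},\{x,y\},\emptyset$, and ruling these out requires $\vdash\neq\vdash^{l}$ — I would spell out that if $\{x\}\vdash\pi(x,y)$ for the partition term then $\vdash$ and $\vdash^{l}$ already agree on this instance, and push this to a genuine collapse. This part needs a little care but is essentially bookkeeping with variable sets.

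For (ii), the inclusion $\vdash^{lr}\subseteq\vdash^{l}$ is the general fact $\vdash^{\bullet l}\geq\vdash^{\bullet lr}$ with $\bullet$ empty, and $\vdash^{lr}\subseteq\vdash^{r}$ needs a short direct check: if $\Gamma\vdash^{lr}\varphi$ then (no antitheorems) $\Gamma\vdash^{l}\varphi$ with $\Var(\varphi)\subseteq\Var(\Gamma)$; since $\vdash^{l}\leq\vdash$ this gives $\Gamma\vdash\varphi$ with $\Var(\varphi)\subseteq\Var(\Gamma)$, i.e.\ $\Gamma\vdash^{r}\varphi$. So $\vdash^{lr}\subseteq\vdash^{l}\cap\vdash^{r}$. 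For the reverse, suppose $\Gamma\vdash^{l}\varphi$ and $\Gamma\vdash^{r}\varphi$; from $\vdash^{r}$ (no antitheorems) get $\Var(\varphi)\subseteq\Var(\Gamma)$, and from $\vdash^{l}$ get $\Delta\subseteq\Gamma$ with $\Var(\Delta)\subseteq\Var(\varphi)\subseteq\Var(\Gamma)$ and $\Delta\vdash\varphi$; hence $\Gamma\vdash^{l}\varphi$ with $\Var(\varphi)\subseteq\Var(\Gamma)$, which unwinds to $\Gamma\vdash^{lr}\varphi$. Strictness $\vdash^{lr}\lneq\vdash^{l},\vdash^{r}$ then follows from (i): if $\vdash^{lr}=\vdash^{l}$ then $\vdash^{l}\leq\vdash^{r}$, contradicting (i), and dually.

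For (iii), I would prove $\vdash^{rl}=\vdash^{rl\bullet}$ by showing both inequalities for arbitrary $\bullet$. One direction, $\vdash^{rl\bullet}\leq\vdash^{rl}$, follows by iterating $\vdash^{\bullet l}\geq\vdash^{\bullet lr}$ and $\vdash^{\bullet r}\geq\vdash^{\bullet rl}$ (each extra letter only shrinks the consequence relation). The reverse, $\vdash^{rl}\leq\vdash^{rl\bullet}$, is exactly where Lemma~\ref{lemma generale su rl} and its Corollary~\ref{cor: rl  bot senza antiteoremi} do the job: if $\Gamma\vdash^{rl}\varphi$ then there is $\Delta\subseteq\Gamma$ with $\Delta\vdash\varphi$ and $\Var(\Delta)=\Var(\varphi)$ (Corollary~\ref{cor: rl  bot senza antiteoremi}(i) with $\bullet$ empty); I then need to feed such a $\Delta$ back through the definitions of $\vdash^{r}$, $\vdash^{l}$ repeatedly and check it survives — concretely, $\Var(\Delta)=\Var(\varphi)$ makes both the left-inclusion side condition ($\exists\Delta'\subseteq\Delta$, $\Var(\Delta')\subseteq\Var(\varphi)$) and the right-inclusion side condition ($\Var(\varphi)\subseteq\Var(\Delta)$) automatically satisfiable with $\Delta'=\Delta$, so $\Delta\vdash^{\bullet}\varphi$ persists at every stage. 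Running this as an induction on $L(\bullet)$ gives $\Gamma\vdash^{rl\bullet}\varphi$. Finally $\vdash^{rl\bullet}=\vdash^{lrl\bullet}$ is the special case $\bullet\mapsto l\bullet$ of what was just proved, combined with $\vdash^{lrl\bullet}\leq\vdash^{rl\bullet}$? — here I must be slightly careful about direction: $\vdash^{lrl\bullet}$ is obtained from $\vdash^{rl\bullet}$ by prepending, not appending, an $l$, so instead I observe $\vdash^{lrl\bullet}=\vdash^{l(rl\bullet)}$ has a leading $l$ hence no antitheorems, and apply the argument of part of the proof (or Corollary~\ref{cor: rl  bot senza antiteoremi}(i) noting $rl$ occurs as a factor of $lrl\bullet$) to get $\vdash^{lrl\bullet}\leq\vdash^{rl}$ and $\vdash^{rl}\leq\vdash^{lrl\bullet}$, the latter again by the persistence-of-$\Delta$ argument since $lrl\bullet$ only extends $rl$ on the right after the leading $l$ is absorbed. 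The main obstacle is bookkeeping: making the "persistence of the witness $\Delta$ with $\Var(\Delta)=\Var(\varphi)$ through an arbitrary alternation of $l$'s and $r$'s" rigorous as a clean induction, handling the leading-$l$ case of $lrl\bullet$ separately so that the factor $rl$ is genuinely a consecutive substring to which Lemma~\ref{lemma generale su rl} applies.
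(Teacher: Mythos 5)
Your treatment of parts (ii) and (iii) follows the paper's own route (the inclusion $\vdash^{lr}\leq\vdash^{l}$ from the remark, the direct check for $\vdash^{lr}\leq\vdash^{r}$ and for $\vdash^{l}\cap\vdash^{r}\leq\vdash^{lr}$ using absence of antitheorems, and Lemma~\ref{lemma generale su rl} / Corollary~\ref{cor: rl  bot senza antiteoremi} for the persistence of a witness $\Delta$ with $\Delta\vdash\varphi$ and $\Var(\Delta)=\Var(\varphi)$ through any further alternation of $l$ and $r$). But part (i) contains a genuine gap in the direction $\vdash^{r}\nleq\vdash^{l}$. Your proposed separating inference is $x,y\vdash^{r}\pi(x,y)$ versus $x,y\nvdash^{l}\pi(x,y)$, and the second claim is false: from $x,y\vdash^{r}\pi(x,y)$ and $\vdash^{r}\leq\vdash$ we get $x,y\vdash\pi(x,y)$, and since both $x$ and $y$ really occur in $\pi(x,y)$ we have $\Var(\{x,y\})\subseteq\Var(\pi(x,y))$, so the \emph{whole} premise set $\Delta=\{x,y\}$ already satisfies the left variable inclusion condition and witnesses $x,y\vdash^{l}\pi(x,y)$. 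Ruling out the proper subsets $\{x\},\{y\},\emptyset$, as you propose, cannot repair this, because the obstruction is $\{x,y\}$ itself; the instance you chose is simply valid in $\vdash^{l}$, and no appeal to $\vdash\neq\vdash^{l}$ changes that. The paper separates the two logics the other way around: $\pi(x,y)\vdash^{r}x$ (clause (ii) of the $r$-partition function), while $\pi(x,y)\nvdash^{l}x$, because the only nonempty subset of the premises contains the variable $y\notin\Var(x)$, and $\emptyset\vdash x$ would turn $\emptyset$ into an antitheorem, contradicting the hypothesis. So the premise, not the conclusion, must carry the extra variable.

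Note also that the gap propagates: in (ii) you derive the strictness $\vdash^{lr}\lneq\vdash^{r}$ from $\vdash^{r}\nleq\vdash^{l}$ (via $\vdash^{lr}=\vdash^{l}\cap\vdash^{r}$), so that half of the strictness claim currently rests on the missing counterexample; once the correct witness $\pi(x,y)\vdash^{r}x$, $\pi(x,y)\nvdash^{l}x$ is in place, your derivation of strictness from (i) is a perfectly good (and slightly cleaner) alternative to the paper's explicit checks such as $\pi(x,y)\nvdash^{lr}x$. The first half of (i), $x\vdash^{l}\pi(x,y)$ but $x\nvdash^{r}\pi(x,y)$, is exactly the paper's argument and is fine.
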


\begin{proof}
(i) it immediately follows by noticing that $\pi(x,y)\vdash^{r}x$ while $\pi(x,y)\nvdash^{l}x$ and $x\vdash^{l}\pi(x,y)$ while $x\nvdash^{r}\pi(x,y)$.

(ii) 
As a direct consequence of Remark \ref{rem: complete for rl lr} we have $\vdash^{lr}\leq\vdash^{l}$.
 We now prove using contraposition that $\vdash^{lr}\leq\vdash^{r}$. So assume $\Gamma\nvdash^{r}\varphi$. There are cases, namely (1) $\Gamma\nvdash\varphi$ or (2) $\Var(\varphi)\nsubseteq\Var(\Gamma)$. (1) immediately implies $\Gamma\nvdash^{l}\varphi$, so $\Gamma\nvdash^{lr}\varphi$. If it is case of (2), assume towards a contradiction that $\Gamma\vdash^{lr}\varphi$. This entails that $\Gamma\vdash^{l}\varphi$ and that $\Var(\varphi)\subseteq\Var(\Gamma)$, which is a contradiction. So $\Gamma\nvdash^{lr}\varphi$. 
 
 Now, $\vdash^{l}\cap\vdash^{r}\leq\vdash^{lr}$ follows by noticing that in the lattice of sublogics of $\vdash$ it holds $\vdash^{l}\land\vdash^{r}=\vdash^{l}\cap\vdash^{r}$, and so, as $\vdash^{lr}\leq\vdash^{r},\vdash^{l}$ it follows $\vdash^{l}\cap\vdash^{r}\leq\vdash^{lr}$. For the other direction, assume $\Gamma\vdash^{lr}\varphi$. This entails $\Gamma\vdash^{l}\varphi$ with $\Var(\varphi)\subseteq\Var(\Gamma)$. Furthermore, as $\vdash^{l}\leq\vdash$, we have $\Gamma\vdash\varphi$ which finally entails $\Gamma\vdash^{r}\varphi$.
 
 Moreover, the fact that $\pi(x,y)\vdash^{r}x$ while $\pi(x,y)\nvdash^{lr}x$ and $x\vdash^{l}\pi(x,y)$ while $x\nvdash^{lr}\pi(x,y)$ proves the desired proper inequality.

(iii) That $\vdash^{rl\bullet}\leq\vdash^{rl}$ follows again by remark \ref{rem: complete for rl lr}. That $\vdash^{rl}\leq\vdash^{rl\bullet}$ follows immediately from  Corollary \ref{cor: rl  bot senza antiteoremi}. Now we prove $\vdash^{lrl\bullet}\leq\vdash^{rl}$. To this end, assume  $\Gamma\vdash^{lrl\bullet}\varphi$. By Lemma \ref{lemma generale su rl} we have that there exists $\Delta\subseteq\Gamma$ such that $\Delta\vdash\varphi$ and $\Var(\Delta)=\Var(\varphi)$. So, it follows $\Delta\vdash^{rl}\varphi$ and, by monotonicity, we obtain $\Gamma\vdash^{rl}\varphi$. The fact that $\vdash^{rl}\leq\vdash^{lrl\bullet}$ is a consequence of Corollary \ref{cor: rl  bot senza antiteoremi}.
\end{proof}

\begin{remark}
Observe that if a logic $\vdash$ has a theorem $\varphi$, then $\vdash^{rl}\lneq\vdash^{lr}$. Indeed it is immediate to verify that $\pi(x,y)\vdash^{lr}\varphi(x)$ while $\pi(x,y)\nvdash^{rl}\varphi(x)$.
\end{remark}\noindent

The following corollary summarizes the results of this section.

\begin{corollary}

Let $\vdash$ be a logic with a partition function and without antitheorems. Then the following holds:
\benroman
\item  there are at most four proper sublogics of variable inclusion of $\vdash$.
\item The sublattice of $\mathcal{L}^\vdash$ generated by $\mathcal{SV}(\vdash)$ has (at most) six elements, and it is represented by the following Figure \ref{lattice log. pic}.
\eroman
\end{corollary}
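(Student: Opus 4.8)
The corollary is a bookkeeping summary of Theorem~\ref{the: lattice of log of var inc}, so the proof is essentially a matter of enumerating the sequences $\bullet$ and collapsing the list of logics $\vdash^{\bullet}$ using the three equalities already established. First I would observe that for part~(i) it suffices to show that every logic $\vdash^{\bullet}$ with $L(\bullet)\geq 1$ coincides with one of $\vdash^{l}$, $\vdash^{r}$, $\vdash^{lr}$, $\vdash^{rl}$. Reading a nonempty sequence $\bullet$ left to right, consider the position of the first occurrence of the pattern $rl$ (if any). By Theorem~\ref{the: lattice of log of var inc}(iii), $\vdash^{rl\bullet'}=\vdash^{rl}$ for any tail $\bullet'$, so any sequence containing $rl$ as a factor collapses to $\vdash^{rl}$ once we also use that a prefix before the first $rl$ of the form $\bullet_0 rl\dots$ again falls under~(iii) (reading $\bullet = lrl\bullet'$ or $\bullet = rl\bullet'$, both equal $\vdash^{rl}$). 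The sequences that never contain $rl$ as a factor are exactly: the empty sequence, $l\dots l$ (all $l$'s), $r\dots r$ (all $r$'s), $r\dots r\, l \dots l$ with at least one $r$ followed by at least one $l$ (but no $lr$ before the block of $l$'s forces us back to the $rl$ case), and $l\dots l\, r\dots r$. Since by Remark~\ref{rem: complete for rl lr} and the idempotency of the constructions $\vdash^{ll}=\vdash^{l}$ and $\vdash^{rr}=\vdash^{r}$ (a block of equal letters collapses to a single letter — this follows from the completeness theorems, as a P\l onka sum of P\l onka sums of the relevant kind is again of that kind), the surviving logics are exactly $\vdash$, $\vdash^{l}$, $\vdash^{r}$, $\vdash^{rl}$, $\vdash^{lr}$, together with the observation that $\vdash^{lrl\bullet}=\vdash^{rl}$ from~(iii) handles the remaining mixed patterns. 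Hence at most four \emph{proper} sublogics of variable inclusion, giving~(i).

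**Part (ii): the Hasse diagram.** Given~(i), the set $\mathcal{SV}(\vdash)$ has at most five elements $\vdash,\vdash^{l},\vdash^{r},\vdash^{lr},\vdash^{rl}$, and I would assemble the order relations among them from Theorem~\ref{the: lattice of log of var inc}: each of $\vdash^{l},\vdash^{r}$ lies strictly below $\vdash$ (by hypothesis $\vdash\neq\vdash^{l},\vdash^{r}$, and every variable-inclusion companion is a sublogic of $\vdash$); $\vdash^{l}$ and $\vdash^{r}$ are incomparable by~(i); $\vdash^{lr}=\vdash^{l}\cap\vdash^{r}$ lies strictly below both by~(ii); and $\vdash^{rl}\leq\vdash^{\bullet}$ for every $\bullet$ by Corollary~\ref{cor: rl bot senza antiteoremi}(ii), so $\vdash^{rl}$ is the bottom element, lying below $\vdash^{lr}$ in particular. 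The sublattice these five generate adds at most one further element, the meet $\vdash^{l}\wedge\vdash^{r}=\vdash^{l}\cap\vdash^{r}=\vdash^{lr}$ — which is already in the set — and the join $\vdash^{l}\vee\vdash^{r}$, which need not equal $\vdash$; calling this element $\vdash^{l}\vee\vdash^{r}$ gives a sixth node sitting between $\{\vdash^{l},\vdash^{r}\}$ and $\vdash$. All other meets and joins among the five fall back into the list: $\vdash^{l}\wedge\vdash^{lr}=\vdash^{lr}$, $\vdash^{lr}\wedge\vdash^{rl}=\vdash^{rl}$, $\vdash^{lr}\vee\vdash^{rl}=\vdash^{lr}$, and joins with $\vdash$ return $\vdash$. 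This yields the six-element lattice: bottom $\vdash^{rl}$, then $\vdash^{lr}$, then the incomparable pair $\vdash^{l},\vdash^{r}$, then $\vdash^{l}\vee\vdash^{r}$, then the top $\vdash$, as drawn in Figure~\ref{lattice log. pic}.

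**Main obstacle.** The delicate point is the combinatorial claim in~(i) that \emph{every} finite word over $\{l,r\}$ collapses, under the rewriting rules $ll\to l$, $rr\to r$, $rl\bullet\to rl$, $lrl\bullet\to rl$, to one of the five normal forms. The rules $ll\to l$ and $rr\to r$ are not literally in Theorem~\ref{the: lattice of log of var inc}; I will need to justify them separately — $\vdash^{ll}=\vdash^{l}$ because the left variable inclusion companion of $\vdash^{l}$ is again a left variable inclusion restriction and one checks directly from the definition of $\vdash^{l}$ that iterating it changes nothing (the condition ``$\Var(\Delta)\subseteq\Var(\varphi)$ with $\Delta\vdash\varphi$'' is idempotent), and similarly $\vdash^{rr}=\vdash^{r}$ from the definition of $\vdash^{r}$ together with the fact that $\vdash$ is antitheorem-free so the second clause is vacuous. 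Once these are in hand, a short case analysis on the first two letters of a reduced word shows no word of length $\geq 3$ survives that is not already covered by~(iii), which is the real content. I would present this as a brief lemma-style paragraph rather than a full induction, since the underlying facts are all quoted.

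\begin{proof}
By Theorem~\ref{the: lattice of log of var inc} together with Remark~\ref{rem: complete for rl lr}, the operators $\vdash\mapsto\vdash^{l}$ and $\vdash\mapsto\vdash^{r}$ satisfy $\vdash^{ll}=\vdash^{l}$ and $\vdash^{rr}=\vdash^{r}$ (directly from the defining conditions, using that $\vdash$ has no antitheorem), while part~(iii) gives $\vdash^{rl\bullet}=\vdash^{rl}=\vdash^{lrl\bullet}$ for any sequence $\bullet$. Consequently, reading any nonempty word $\bullet$ over $\{l,r\}$ from left to right and repeatedly applying $ll\to l$, $rr\to r$: if the result contains the factor $rl$, then writing $\bullet=\bullet_{0}rl\bullet_{1}$ and noting that $\bullet_{0}$ is then either empty, or ends in $l$ (so the word is $\dots lrl\dots$), we obtain $\vdash^{\bullet}=\vdash^{rl}$ by~(iii); if it does not contain $rl$, then after the collapses it is one of $l$, $r$, or of the form $lr$ (a block of $l$'s followed by a block of $r$'s, reduced), giving $\vdash^{l}$, $\vdash^{r}$ or $\vdash^{lr}$ respectively. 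Hence $\mathcal{SV}(\vdash)\subseteq\{\vdash,\vdash^{l},\vdash^{r},\vdash^{lr},\vdash^{rl}\}$, and by hypothesis $\vdash\neq\vdash^{l},\vdash^{r}$, while $\vdash^{lr}\lneq\vdash^{l},\vdash^{r}$ and $\vdash^{rl}\leq\vdash^{lr}$ by Theorem~\ref{the: lattice of log of var inc}(ii) and Corollary~\ref{cor: rl  bot senza antiteoremi}(ii); this proves~(i).

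For~(ii), by~(i) the generating set has at most the five elements above, ordered (by Theorem~\ref{the: lattice of log of var inc} and Corollary~\ref{cor: rl  bot senza antiteoremi}) with $\vdash^{rl}$ the least element, $\vdash^{lr}$ above it, $\vdash^{l}$ and $\vdash^{r}$ above $\vdash^{lr}$ and mutually incomparable, and $\vdash$ the greatest. Computing meets and joins: $\vdash^{l}\wedge\vdash^{r}=\vdash^{l}\cap\vdash^{r}=\vdash^{lr}$ by~(ii) of the theorem, and every other meet among the five reduces to one of the five by comparability; the only join not already present is $\vdash^{l}\vee\vdash^{r}$, which lies strictly between $\{\vdash^{l},\vdash^{r}\}$ and $\vdash$. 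This adds a single sixth element, and no further meets or joins escape the resulting set. The Hasse diagram of this six-element lattice is precisely Figure~\ref{lattice log. pic}.
\end{proof}
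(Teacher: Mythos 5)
Your proof is correct and follows the route the paper intends: the corollary is stated there only as a summary of Theorem~\ref{the: lattice of log of var inc} and Corollary~\ref{cor: rl  bot senza antiteoremi}, and your argument is exactly that derivation with the collapsing of arbitrary sequences over $\{l,r\}$ made explicit. The one ingredient you add beyond the quoted results --- the idempotency $\vdash^{ll}=\vdash^{l}$ and, for antitheorem-free logics, $\vdash^{rr}=\vdash^{r}$, needed to reduce non-alternating sequences before applying part (iii) of the theorem --- is correctly identified as missing from the paper's stated results and correctly justified from the definitions.
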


\begin{center}
 \begin{tikzpicture}[scale=1.6]\label{lattice log. pic}
 
      \node (ciu) at (1,5) {$\vdash$};
   \node (one) at (1,4) {$\vdash^l\lor\vdash^r$};
          \node (>s1) at (0,3) {$\vdash^{l}$};
           \node (>d1) at (2,3) {$\vdash^{r}$};
    \node (<<1) at (1,2) {$\vdash^{lr}{=}\vdash^{l}\cap\vdash^{r}$};
    \node (<1) at (1,1) {$\vdash^{rl}{=}\vdash^{rl\bullet}{=}\vdash^{lrl\bullet}$};
       \draw[thick] (one) -- (ciu); 
        \draw[thick] (one) -- (>s1); 
 \draw[thick] (one) -- (>d1); 
 \draw[thick] (>d1) -- (<<1); 
 \draw [thick](>s1) -- (<<1); 
 \draw [thick](<1) -- (<<1); 
\node[align=center,font=\bfseries, yshift=2em] (title) 
   at (current bounding box.north)
    { Figure \ref{lattice log. pic}};
    \end{tikzpicture}
\end{center}

\section{Logics with antitheorems}

We now turn to the case in which the logic $\vdash$ does posses an antitheorem $\Sigma(x)$. In the next Theorem \ref{th lat logvarinc with incons} we assume w.l.o.g. $\Sigma(x)=\{\epsilon_{1}(x),\dots,\epsilon_{n}(x)\}$.

\begin{theorem} \label{th lat logvarinc with incons}
Let $\vdash$ be a logic with antitheorems. Then the following relations hold
\benroman
\item $\vdash^{rl}\nleq\vdash^{lr}$ and $\vdash^{lr}\nleq\vdash^{rl}$

\item $\vdash^{l}\cap\vdash^{r}{\gneq}\vdash^{lr},\vdash^{rl}$
\item $\vdash^{rlr}{\lneq}\vdash^{rl}$ and $\vdash^{lrl}{\lneq}\vdash^{lr}$
\item $\vdash^{rlr}{\lneq}\vdash^{lr}\cap\vdash^{rl}$
\item $\vdash^{lrl}{=}\vdash^{lrlr}{=}\vdash^{rlrl}{\lneq}\vdash^{rlr}$.
\item $\vdash^{rlrl\bullet}{=}\vdash^{lrl\bullet}$

\eroman
where $\bullet$ denotes any (possibly empty) sequence of elements among $\{l,r\}$.

\end{theorem}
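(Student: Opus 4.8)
The plan is to handle the six items more or less in the order listed, leaning on the completeness results (Theorems \ref{thm:completeness for left}, \ref{completeness right}, and Remark \ref{rem: complete for rl lr}) for the separating examples, and on an analogue of Lemma \ref{lemma generale su rl} for the collapse statements. The key structural fact that drives everything is that, by Remark \ref{rem: l no antiteoremi}, any logic $\vdash^{\bullet}$ with $l\in\bullet$ is antitheorem-free, whereas a logic of the form $\vdash^{r\bullet}$ can still retain the antitheorem $\Sigma(x)$ (in variable $x$ only, by Remark \ref{rem: inconsistency term in var x only}); this asymmetry is exactly what makes the lattice here strictly bigger than in the antitheorem-free case. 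So first I would record, as a running observation, that $\Sigma(x)\vdash^{r\bullet}\varphi$ for every $\varphi$ as long as the sequence $r\bullet$ begins with $r$ and contains no $l$, while once an $l$ is applied the antitheorem is destroyed.

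For item (i): to see $\vdash^{rl}\nleq\vdash^{lr}$, observe that $\vdash^{lr}$ has no antitheorem (it has an $l$), so take any $\varphi$ with $\Sigma(x)\vdash^{r}\varphi$; then $\Sigma(x)\vdash^{rl}\psi$ for a suitable $\psi$ in variables contained in $\Var(\Sigma(x))=\{x\}$ — more carefully, since $\vdash^{r}$ still has $\Sigma(x)$ as an antitheorem, and applying the left-inclusion construction to $\vdash^{r}$ keeps the inference $\Sigma(x)\vdash^{rl}\epsilon_i(x)$ (all variables already occur), whereas $\Sigma(x)\nvdash^{lr}\epsilon_i(x)$ can be forced to fail by a Płonka-sum countermodel built from Remark \ref{rem: complete for rl lr} (put $h(x)=n$ in a bottom component with a designated value, and arrange an upper component where $\epsilon_i$ lands outside the filter). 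The reverse non-inclusion $\vdash^{lr}\nleq\vdash^{rl}$ is witnessed, as in Theorem \ref{the: lattice of log of var inc}(i)–(ii), by $x\vdash^{l}\pi(x,y)$ hence $x\vdash^{lr}\pi(x,y)$ after checking $\Var(\pi(x,y))=\{x,y\}\subseteq\Var(\{x\})$? — no; instead use a theorem or a tautological-type instance together with the remark after Theorem \ref{the: lattice of log of var inc}, i.e.\ exhibit an inference valid in $\vdash^{lr}$ failing in $\vdash^{rl}$ via $\pi(x,y)\vdash^{lr}\varphi(x)$ while $\pi(x,y)\nvdash^{rl}\varphi(x)$. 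Item (ii) then follows: $\vdash^{l}\cap\vdash^{r}$ dominates both $\vdash^{lr}$ and $\vdash^{rl}$ (each is a sublogic of both $\vdash^l$ and $\vdash^r$ by Remark \ref{rem: complete for rl lr} and the argument of Theorem \ref{the: lattice of log of var inc}(ii)), and the domination is strict by (i) since $\vdash^{lr}\neq\vdash^{rl}$ so neither equals the meet. For (iii) the strictness $\vdash^{rlr}\lneq\vdash^{rl}$ and $\vdash^{lrl}\lneq\vdash^{lr}$ is the instance $\vdash^{\bullet r}\geq\vdash^{\bullet rl}$ from the remark, made proper again by a partition-function witness ($\pi(x,y)\vdash^{\bullet r}x$, $\pi(x,y)\nvdash^{\bullet rl}x$-style), exactly parallel to Theorem \ref{the: lattice of log of var inc}(ii).

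The real work is in (iv), (v), (vi), which are collapse/containment statements. For these I would prove the analogue of Lemma \ref{lemma generale su rl} adapted to the presence of an antitheorem: if $\Gamma\vdash^{lrl\bullet}\varphi$ then either $\Sigma(x)$-type degeneracy occurs in $\Gamma$ (which cannot happen once we are past the first $l$, by Remark \ref{rem: l no antiteoremi}, so this branch is vacuous for $lrl\bullet$) or there is $\Delta\subseteq\Gamma$ with $\Delta\vdash\varphi$ and $\Var(\Delta)=\Var(\varphi)$ — exactly the conclusion of Lemma \ref{lemma generale su rl} with $rl$ replaced by $lrl$, provable by the same induction on $L(\bullet)$ since the seed $lrl$ already contains an $rl$-block. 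Granting this, $\vdash^{lrl\bullet}\leq\vdash^{rl}$ follows as in Theorem \ref{the: lattice of log of var inc}(iii); combined with $\vdash^{rl}\leq\vdash^{\bullet}$ specialized appropriately (Corollary \ref{cor: rl bot senza antiteoremi}(ii), applicable since $\vdash^{rl}$ is antitheorem-free) one gets $\vdash^{lrl}=\vdash^{lrlr}=\vdash^{rlrl}$, i.e.\ (v); the strict inequality $\vdash^{lrl}\lneq\vdash^{rlr}$ comes from $\vdash^{rlr}$ still seeing the antitheorem $\Sigma(x)$ (it begins with $r$, no $l$) while $\vdash^{lrl}$ does not, so $\Sigma(x)\vdash^{rlr}\varphi$ but $\Sigma(x)\nvdash^{lrl}\varphi$. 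Item (iv), $\vdash^{rlr}\lneq\vdash^{lr}\cap\vdash^{rl}$: inclusion in $\vdash^{rl}$ is the remark $\vdash^{rl}\geq\vdash^{rlr}$; inclusion in $\vdash^{lr}$ needs an argument that $\vdash^{rlr}\leq\vdash^{lr}$ — here I would unfold the definition of $\vdash^{rlr}$ as the right-inclusion companion of $\vdash^{rl}$, note $\vdash^{rl}\leq\vdash^{l}$ hence any $\vdash^{rl}$-inference with matching variables is a $\vdash^{l}$-inference, and the antitheorem clause transfers since $\Sigma(x)$ is an antitheorem of $\vdash^{rl}$? (it is not — $\vdash^{rl}$ has an $l$), so actually the antitheorem clause of $\vdash^{rlr}$ is vacuous and $\vdash^{rlr}$ reduces to "$\vdash^{rl}$ plus variable-inclusion on the right", which is then $\leq\vdash^{lr}$ by the Theorem \ref{the: lattice of log of var inc}(ii) pattern; strictness is again a $\pi$-witness. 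Finally (vi), $\vdash^{rlrl\bullet}=\vdash^{lrl\bullet}$, follows from (v) by prepending: $\vdash^{rlrl\bullet}=\vdash^{(rlrl)\bullet}$ and $rlrl$ collapses to $lrl$-level by the identity $\vdash^{rlrl}=\vdash^{lrl}$ just established, after which a monotone-closure/induction argument on $\bullet$ (same as Theorem \ref{the: lattice of log of var inc}(iii)) closes it.

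The main obstacle will be item (iv)/(v): getting the adapted lemma exactly right, in particular tracking which prefixes still carry the antitheorem and ensuring the degenerate "$\Sigma(x)\subseteq\Gamma$" branch in the definition of $\vdash^{r\bullet}$ is correctly neutralized once an $l$ has appeared — the whole separation between the 8-element and 5-element counts hinges on this bookkeeping, and one must be careful that the induction's seed block genuinely contains a complete $rl$ pair rather than a dangling $l$ or $r$. A secondary nuisance is producing clean Płonka-sum countermodels for the several strict inequalities; these are all minor variations of Examples \ref{exa: pwk as left of cl} and the $\mathsf{B}_3$ example, so I would state one template and invoke it repeatedly rather than redrawing each matrix.
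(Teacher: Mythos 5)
Your overall skeleton (use Remark \ref{rem: l no antiteoremi} and Lemma \ref{lemma generale su rl} for the collapse statements (v)--(vi), explicit witnesses for the non-inclusions and strict inclusions) is the same as the paper's, but your bookkeeping of where the antitheorem survives goes wrong at exactly the point where the work is. You assert that $\vdash^{rlr}$ ``still sees the antitheorem $\Sigma(x)$ (it begins with $r$, no $l$)'', hence $\Sigma(x)\vdash^{rlr}\varphi$ for every $\varphi$. But the sequence $rlr$ contains an $l$: $\vdash^{rlr}=(\vdash^{rl})^{r}$ and $\vdash^{rl}$ is antitheorem-free by Remark \ref{rem: l no antiteoremi}, so the antitheorem clause in the definition of $\vdash^{rlr}$ is vacuous and, for instance, $\Sigma(x)\nvdash^{rlr}\pi(x,y)$ --- the paper relies on precisely this non-inference in (iii) and (iv). (You even observe this vacuity yourself when treating (iv), so your justification of the strictness $\vdash^{lrl}\lneq\vdash^{rlr}$ in (v) contradicts your own treatment of (iv).) The paper's separating example for (v) is genuinely more delicate: $\pi(y,z),\Sigma(x)\vdash^{rlr}\pi(y,x)$ while $\pi(y,z),\Sigma(x)\nvdash^{rlrl}\pi(y,x)$; the antitheorem acts inside the innermost $\vdash^{r}$ (via $\Sigma(x)\vdash^{r}\pi(y,x)$, with $\Sigma(x)$ serving as the left-inclusion subset), not at the outer level as your sketch assumes.

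The concrete witnesses you give for (i) and (iii) also fail. For $\vdash^{rl}\nleq\vdash^{lr}$ you propose $\Sigma(x)\vdash^{rl}\epsilon_{i}(x)$ versus $\Sigma(x)\nvdash^{lr}\epsilon_{i}(x)$: since $\epsilon_{i}(x)\in\Sigma(x)$, the inference $\Sigma(x)\vdash^{lr}\epsilon_{i}(x)$ holds by reflexivity and no P\l onka-sum countermodel can defeat it; restricting the conclusion to the variable $x$ is the opposite of what is needed --- the conclusion must contain a \emph{fresh} variable (the paper uses $\pi(x,y)$), so that the right-inclusion requirement of $\vdash^{lr}$ fails while the antitheorem clause of the inner $\vdash^{r}$ still fires. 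For $\vdash^{lr}\nleq\vdash^{rl}$ your witness $\pi(x,y)\vdash^{lr}\varphi(x)$ presupposes that $\vdash$ has a theorem, which is not among the hypotheses; the paper instead uses $y,\pi(\epsilon_{1}(x),z),\dots,\pi(\epsilon_{n}(x),z)\vdash^{lr}\pi(y,z)$ together with its failure in $\vdash^{rl}$, which needs only the antitheorem and the partition term. Likewise, the strictness in (iii) cannot be ``a partition-function witness exactly parallel'' to Theorem \ref{the: lattice of log of var inc}: in the antitheorem-free case one has $\vdash^{rlr}=\vdash^{rl}$, so any example separating $\vdash^{rlr}$ from $\vdash^{rl}$ must involve $\Sigma(x)$, and the pattern you cite is not even valid ($\pi(x,y)\nvdash^{l}x$, hence $\pi(x,y)\nvdash^{lr}x$). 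Your item (ii), the inclusion argument in (iv), and the lemma-based collapses in (v)--(vi) do follow the paper's route; but in this theorem the content is almost entirely in the witnesses, and those need to be redone along the paper's lines.
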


\begin{proof}

$(i)$. 
Firstly we show $\vdash^{rl}\nleq\vdash^{lr}$. To this end it is immediate to verify that $\Sigma(x)\vdash^{rl}\pi(x,y)$ while $\Sigma(x)\nvdash^{lr}\pi(x,y)$. 

For the other inequality, 
first observe that \[\Var(\pi(y,z))\subseteq\Var(y,\pi(\epsilon_{1}(x),z),\dots,\pi(\epsilon_{n}(x),z))\] and, moreover 
\[y,\pi(\epsilon_{1}(x),z),\dots,\pi(\epsilon_{n}(x),z)\vdash^{l
}\pi(y,z),\]
as $y\vdash^{l}\pi(y,z)$ and $\{y\}\subseteq\{y,\pi(\epsilon_{1}(x),z),\dots,\pi(\epsilon_{n}(x),z)\}$. So, this proves
\[y,\pi(\epsilon_{1}(x),z),\dots,\pi(\epsilon_{n}(x),z)\vdash^{lr}\pi(y,z).\]

This, together with the fact that for no $\Delta\subseteq\{y,\pi(\epsilon_{1}(x),z),\dots,\pi(\epsilon_{n}(x),z)\}$ it holds $\Delta\vdash^{r}\pi(y,z)$ and $\Var(\Delta)\subseteq\{y,z\}$ shows
\[y,\pi(\epsilon_{1}(x),z),\dots,\pi(\epsilon_{n}(x),z)\nvdash^{rl}\pi(y,z),\]
as desired.

$(ii)$. We first prove $\vdash^{l}\cap\vdash^{r}\geq\vdash^{lr},\vdash^{rl}$. Let $\Gamma\vdash^{lr}\varphi$, then, as $\vdash^{l}$ does not have antitheorems, it must be that $\Gamma\vdash^{l}\varphi$ and $\Var(\varphi)\subseteq\Var(\Gamma)$. This, together with $\vdash^{l}\leq\vdash$ entails $\Gamma\vdash\varphi$, so $\Gamma\vdash^{r}\varphi$. So, $\Gamma\vdash^{l}\cap\vdash^{r}\varphi$. That $\vdash^{l}\cap\vdash^{r}\geq\vdash^{rl}$ is proved in the same way. 

As the inferences described in point  $(i)$ hold both in $\vdash^{l}$ and $\vdash^{r}$, we obtain $\vdash^{lr},\vdash^{rl}\lneq\vdash^{l}\cap\vdash^{r}$.

$(iii)$. The fact that $\vdash^{rlr}\leq\vdash^{rl}$ and $\vdash^{lrl}\leq\vdash^{lr}$is a direct consequence of Remark \ref{rem: complete for rl lr}.

This, together with the fact that
\[y,\pi(\epsilon_{1}(x),z),\dots,\pi(\epsilon_{n}(x),z)\nvdash^{lrl}\pi(y,z)\]
and $\Sigma(x)\nvdash^{rlr}\pi(x,y)$ proves the desired proper inequalities.

$(iv)$. We first prove $\vdash^{rlr}\leq\vdash^{lr}\cap\vdash^{rl}$. That $\vdash^{rlr}\leq\vdash^{rl}$ follows, again by Remark \ref{rem: complete for rl lr}. Consider $\Gamma\vdash^{rlr}\varphi$, so, as $\vdash^{rl}$ does not have antitheorems, $\Gamma\vdash^{rl}\varphi$ with $\Var(\varphi)\subseteq\Var(\Gamma)$. This entail that there exists $\Delta\subseteq\Gamma$, $\Delta\vdash^{r}\varphi$ and $\Var(\Delta)\subseteq\Var(\varphi)$. As, $\vdash^{r}\leq\vdash$ we obtain $\Delta\vdash\varphi$, so $\Delta\vdash^{l}\varphi$ which, by monotonicity entails $\Gamma\vdash^{l}\varphi$. Recalling that $\Var(\varphi)\subseteq\Var(\Gamma)$ we conclude $\Gamma\vdash^{lr}\varphi$.

The proper inclusion is proved by noticing that $\Sigma(x)\vdash^{lr}\pi(x,y)$, $\Sigma(x)\vdash^{rl}\pi(x,y)$ while $\Sigma(x)\nvdash^{rlr}\pi(x,y)$.

$(v)$. As by remark \ref{rem: l no antiteoremi} $\vdash^{l},\vdash^{lr},\vdash^{rl}$ are logics without antitheorems, then by Lemma \ref{lemma generale su rl} we know that $\Gamma\vdash^{lrl}\varphi$ entails that there exists $\Delta\subseteq\Gamma$, $\Delta\vdash\varphi$ and $\Var(\varphi)=\Var(\Delta)$ (the same holds for $\vdash^{lrlr}$ and $\vdash^{rlrl}$). As this immediately implies $\Delta\vdash^{lrlr}\varphi$ and $\Delta\vdash^{rlrl}\varphi$, by monotonicity we conclude $\Gamma\vdash^{lrlr}\varphi$ and $\Gamma\vdash^{rlrl}\varphi$, so $\vdash^{lrlr}=\vdash^{rlrl}=\vdash^{lrl}$.

 It only remains to prove that $\vdash^{rlrl}\lneq\vdash^{rlr}$. To this end, it suffices to note that $\pi(y,z),\Sigma(x)\vdash^{rlr}\pi(y,x)$ while  $\pi(y,z),\Sigma(x)\nvdash^{rlrl}\pi(y,x)$.

 $(vi)$. The equality $\vdash^{rlrl\bullet}=\vdash^{lrl\bullet}$ is a straightforward application of Lemma \ref{lemma generale su rl}, using the same strategy of point (v).
\end{proof}

The following corollary summarizes the results of the section:

\begin{corollary}

Let $\vdash$ be a logic with a partition function and antitheorems, then 

\benroman

\item  there are at most 6 proper sublogics of variable inclusion of $\vdash$.

\item the sublattice of $\mathcal{L}^\vdash$ generated by $\mathcal{SV}(\vdash)$ has (at most) 11 elements, and it is represented by the following  Figure \ref{lattice log. incterm pic}

\eroman 

\end{corollary}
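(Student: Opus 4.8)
The plan is to derive the corollary as a direct bookkeeping consequence of Theorem~\ref{th lat logvarinc with incons}, exactly paralleling the antitheorem-free case. First I would enumerate all logics of the form $\vdash^{\bullet}$ and argue that, up to provable equality, only finitely many distinct logics survive. The starting logics are $\vdash^{l}$ and $\vdash^{r}$; iterating, the candidates are $\vdash^{l},\vdash^{r},\vdash^{lr},\vdash^{rl},\vdash^{lrl},\vdash^{rlr}$, together with all longer strings. The key collapse facts are supplied by the theorem: point~(vi) gives $\vdash^{rlrl\bullet}=\vdash^{lrl\bullet}$, which lets us push any occurrence of the block $rlrl$ down to $lrl$; point~(v) gives $\vdash^{lrl}=\vdash^{lrlr}=\vdash^{rlrl}$, so once a string contains $lrl$ as a prefix (after normalisation) it is constant and any further $l$'s or $r$'s add nothing (by Corollary~\ref{cor: rl bot senza antiteoremi} applied to the antitheorem-free logics $\vdash^{l},\vdash^{lr},\vdash^{lrl}$, exactly as in the proof of (v)). Similarly, point~(iii) shows $\vdash^{rlr}\lneq\vdash^{rl}$ and $\vdash^{lrl}\lneq\vdash^{lr}$, so the chain of iterates strictly decreases until it stabilises at $\vdash^{lrl}$. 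Carrying this normalisation out, every $\vdash^{\bullet}$ with $L(\bullet)\geq 1$ equals one of the six logics $\vdash^{l},\vdash^{r},\vdash^{lr},\vdash^{rl},\vdash^{rlr},\vdash^{lrl}$; this proves (i) once we also note, from the strict inequalities in the theorem, that these six are pairwise distinct and all properly below $\vdash$ (using the hypothesis $\vdash\neq\vdash^{l},\vdash^{r}$, which may be assumed exactly as in Theorem~\ref{the: lattice of log of var inc}).

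Next I would compute the sublattice of $\mathcal{L}^{\vdash}$ generated by $\mathcal{SV}(\vdash)=\{\vdash^{l},\vdash^{r},\vdash^{lr},\vdash^{rl},\vdash^{rlr},\vdash^{lrl}\}$. The meet in $\mathcal{L}^{\vdash}$ is intersection, so the only new elements that can appear are meets of incomparable pairs (joins of the small elements stay inside the generated set or climb towards $\vdash$). From the Hasse data in the theorem: $\vdash^{l}$ and $\vdash^{r}$ are incomparable (point~(i) of Theorem~\ref{the: lattice of log of var inc} still applies since the witnessing inferences $\pi(x,y)\vdash^{r}x$, $x\vdash^{l}\pi(x,y)$ are antitheorem-independent), and their meet $\vdash^{l}\cap\vdash^{r}$ is, by point~(ii), a new logic strictly above both $\vdash^{lr}$ and $\vdash^{rl}$; $\vdash^{lr}$ and $\vdash^{rl}$ are incomparable by point~(i), and by point~(iv) their meet is $\vdash^{rlr}$, which is already in the list; $\vdash^{rlr}$ lies above $\vdash^{lrl}$ by point~(iii) together with (v). I would then check that $\vdash^{l}\cap\vdash^{r}$ is the only genuinely new element: any other meet reduces, via the containments just listed, to one of the six generators or to $\vdash^{l}\cap\vdash^{r}$; and the join $\vdash^{l}\lor\vdash^{r}$ need not lie below $\vdash$ strictly, but it is an element of the generated lattice, giving one further node. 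Counting: the six generators, plus $\vdash^{l}\cap\vdash^{r}$, plus $\vdash^{l}\lor\vdash^{r}$, plus the top $\vdash$ — but $\vdash$ itself is not in $\mathcal{SV}(\vdash)$; whether it is counted depends on whether we include $\vdash$ as the ambient top. Matching the stated bound of $11$, I would include $\vdash$ and $\vdash^{l}\lor\vdash^{r}$ as the two extra top nodes and $\vdash^{l}\cap\vdash^{r}$ as the one extra middle node, for $6+1+2+2=11$ once one also accounts for the two incomparable meet-nodes $\vdash^{lr},\vdash^{rl}$ being distinct from their common meet; a careful tally against Figure~\ref{lattice log. incterm pic} fixes the exact count.

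The main obstacle I anticipate is the bookkeeping in the second part: making sure the list of meets and joins is genuinely closed and that no pair of the eight-to-eleven nodes secretly coincides or secretly needs a further meet. Concretely, I must verify that $\vdash^{l}\cap\vdash^{rlr}$, $\vdash^{r}\cap\vdash^{lrl}$, $(\vdash^{l}\cap\vdash^{r})\cap\vdash^{lr}$, and the like all collapse onto already-listed elements — this follows from the total ordering among $\{\vdash^{lr},\vdash^{rlr},\vdash^{lrl}\}$ below $\vdash^{l}$ and symmetrically below $\vdash^{r}$, but it needs to be said. The other delicate point is the strictness of every inequality drawn in the figure: each strict step must be witnessed by an explicit pair $\Gamma,\varphi$ separating the two logics, and all of these witnesses are already produced inside the proof of Theorem~\ref{th lat logvarinc with incons} (the inferences involving $\Sigma(x)$, $\pi(x,y)$, and the family $y,\pi(\epsilon_{1}(x),z),\dots,\pi(\epsilon_{n}(x),z)$), so I would simply cite them rather than re-deriving. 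With those checks in place, (i) and (ii) follow, and the figure is justified node by node and edge by edge.
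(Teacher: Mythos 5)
Your part (i) is essentially the intended argument and is fine: idempotence of the two operators plus items (iii), (v), (vi) of Theorem \ref{th lat logvarinc with incons} collapse every $\vdash^{\bullet}$ onto one of $\vdash^{l},\vdash^{r},\vdash^{lr},\vdash^{rl},\vdash^{rlr},\vdash^{lrl}$. The gap is in part (ii). You claim that the meet of $\vdash^{lr}$ and $\vdash^{rl}$ ``is $\vdash^{rlr}$, which is already in the list, by point (iv)''; point (iv) asserts exactly the opposite, namely $\vdash^{rlr}\lneq\vdash^{lr}\cap\vdash^{rl}$, the strictness being witnessed by $\Sigma(x)\vdash^{lr}\pi(x,y)$, $\Sigma(x)\vdash^{rl}\pi(x,y)$ and $\Sigma(x)\nvdash^{rlr}\pi(x,y)$. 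So $\vdash^{rl}\cap\vdash^{lr}$ is a genuinely new element of the generated sublattice, strictly between $\vdash^{rlr}$ and the incomparable pair $\vdash^{lr},\vdash^{rl}$; it is a separate node of Figure \ref{lattice log. incterm pic}. You also omit the join $\vdash^{rl}\lor\vdash^{lr}$, which the generated sublattice must contain and which nothing in the theorem identifies with a previously listed logic (it sits below $\vdash^{l}\cap\vdash^{r}$ by item (ii)). Because of these two omissions your closing tally ``$6+1+2+2=11$'' does not correspond to any actual list of eleven elements, and the appeal to ``a careful tally against the figure'' is circular, since the figure is precisely what part (ii) asks you to justify.

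The count the paper intends is: the seven logics of variable inclusion $\vdash,\vdash^{l},\vdash^{r},\vdash^{lr},\vdash^{rl},\vdash^{rlr},\vdash^{lrl}$ (note that $\bullet$ may be empty, so $\vdash\in\mathcal{SV}(\vdash)$, which is also how the six-element count in the antitheorem-free case is reached), plus the four lattice-generated elements $\vdash^{l}\lor\vdash^{r}$, $\vdash^{l}\cap\vdash^{r}$ (strictly above $\vdash^{lr}$ and $\vdash^{rl}$ by item (ii)), $\vdash^{rl}\lor\vdash^{lr}$, and $\vdash^{rl}\cap\vdash^{lr}$ (strictly above $\vdash^{rlr}$ by item (iv)). Closure of this eleven-element set under meets and joins is then immediate from the order relations supplied by Theorem \ref{the: lattice of log of var inc}(i) and Theorem \ref{th lat logvarinc with incons}: the only incomparable pairs in the set are $\{\vdash^{l},\vdash^{r}\}$ and $\{\vdash^{lr},\vdash^{rl}\}$, and their meets and joins are already listed. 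This is exactly the bookkeeping step you flagged as delicate but then resolved incorrectly; with it carried out, (ii) and the figure follow.
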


\begin{center}
 \begin{tikzpicture}[scale=1.3]\label{lattice log. incterm pic}
   
   \node (one) at (1,6) {$\vdash$};
      \node (ljoinr) at (1,5) {$\vdash^l\lor\vdash^r$};
          \node (l) at (0,4) {$\vdash^{l}$};
           \node (r) at (2,4) {$\vdash^{r}$};
                      \node (join rl lr) at (1,2) {$\vdash^{rl}\lor\vdash^{lr}$};
    \node (cap1) at (1,3) {$\vdash^{r}\cap \vdash^{l}$};
     \node (lr) at (0,1) {$\vdash^{lr}$};
           \node (rl) at (2,1) {$\vdash^{rl}$};
               \node (cap2) at (1,0) {$\vdash^{rl}\cap\vdash^{lr}$};
               \node (rlr) at (1,-1) {$\vdash^{rlr}$};
               \node (rlrl) at (1,-2) {$\vdash^{rlrl\bullet}{=}\vdash^{lrl}{=}\vdash^{lrl\bullet}$};

        \draw [thick](ljoinr) -- (one);

        \draw [thick](ljoinr) -- (l); 
 \draw [thick](ljoinr) -- (r); 
 \draw [thick](r) -- (cap1); 
 \draw [thick](l) -- (cap1); 
  \draw [thick](join rl lr) -- (cap1); 
 \draw [thick](join rl lr) -- (lr); 
 \draw [thick](join rl lr) -- (rl); 
  \draw [thick](rl) -- (cap2); 
  \draw [thick](lr) -- (cap2); 
    \draw[thick] (cap2) -- (rlr); 
  \draw[thick] (rlrl) -- (rlr); 

\node[align=center,font=\bfseries, yshift=2em] (title) 
   at (current bounding box.north)
    { Figure \ref{lattice log. incterm pic}};
    \end{tikzpicture}
\end{center}

\section{The sublogics of variable inclusion of classical logic}
In this final section we briefly investigate the lattice of logics of variable inclusion of Classical logic. 
 As we already noticed, only three proper sublogics of variable inclusion of classical logics have already been investigated, namely $\PWK,\mathsf{B}_3$ and $\vdash_{\mathsf{K^{w}_{4n}}}$.

Let now consider $\vdash_\CL$, the logic defined by the matrix $\langle\B_{2}, 1\rangle$ where $\B_{2}$ is the two-elements Boolean algebra. The matrices that defines the sublogics of variable inclusion of classical logic are as follows:

 \begin{center}

\begin{tikzpicture}[scale=2.5]\label{example: lattice log.pic}

    \node (1) at (0.2,3.5) {\circled{$1$}};
    \node (0) at (0.2,3) {$0$};
        \node (tit) at (0.2,2.8) {$\vdash_{\CL}$};
         \draw (0) -- (1);

   \node (n) at (-1.5,4) {\circled{$n$}};
    \node (1) at (-1.5,3.5) {\circled{$1$}};
    \node (0) at (-1.5,3) {$0$};
        \node (tit) at (-1.5,2.8) {$\vdash_{\PWK}$};
         \draw (0) -- (1); 
 \draw (1) -- (n); 
        
       \node (m) at (-1,4.5) {$m$};
   \node (n) at (-1,4) {\circled{$n$}};
    \node (1) at (-1,3.5) {\circled{$1$}};
    \node (0) at (-1,3) {$0$};
        \node (tit) at (-1,2.8) {$\vdash_{\mathsf{K^{w}_{4n}}}$};

 \draw (0) -- (1); 
 \draw (1) -- (n); 
  \draw (n) -- (m); 
 
      \node (p) at (-0.5,5) {\circled{$p$}};
    \node (m) at (-0.5,4.5) {$m$};
    \node (n) at (-0.5,4) {\circled{$n$}};
    \node (1) at (-0.5,3.5) {\circled{$1$}};
    \node (0) at (-0.5,3) {$0$};
        \node (tit) at (-0.5,2.8) {$\vdash_{\mathsf{K^{w}_{4n}}}^l$};

 \draw (0) -- (1); 
 \draw (1) -- (n); 
  \draw (n) -- (m); 
      \draw (m) -- (p); 

%
%

         
   \node (n) at (2,4) {$n$};
    \node (1) at (2,3.5) {\circled{$1$}};
    \node (0) at (2,3) {$0$};
        \node (tit) at (2,2.8) {$\vdash_{\mathsf{B}_3}$};

 \draw (0) -- (1); 
 \draw (1) -- (n); 
   
           \node (p) at (1.5,4.5) {\circled{$m$}};
   \node (n) at (1.5,4) {$n$};
    \node (1) at (1.5,3.5) {\circled{$1$}};
    \node (0) at (1.5,3) {$0$};
        \node (tit) at (1.5,2.8) {$\vdash_{\mathsf{B}_3}^l$};

 \draw (0) -- (1); 
 \draw (1) -- (n); 
  \draw (p) -- (n);

     \node (p) at (1,5) {$p$};
    \node (m) at (1,4.5) {\circled{$m$}};
    \node (n) at (1,4) {$n$};
    \node (1) at (1,3.5) {\circled{$1$}};
    \node (0) at (1,3) {$0$};
        \node (tit) at (1,2.8) {$\vdash_{\mathsf{B}_3}^{lr}$};

 \draw (0) -- (1); 
 \draw (1) -- (n); 
  \draw (n) -- (m); 
    \draw (m) -- (p);



    \end{tikzpicture}
    
\end{center}

Letting $\pi(x,y)=x\land(x\lor y)$ it is not difficult to apply Theorem \ref{th lat logvarinc with incons}, and to observe that there are 6 proper sublogics of variabale inclusion of $\CL$. Moreover, the lattice generated by $\mathcal{SV}(\vdash_\CL)$ is the following:

\begin{center}
 \begin{tikzpicture}[scale=1.3]\label{lattice log. cl pic}
   
   \node (vdash) at (1,5) {$\vdash_\CL$};
          \node (l) at (0,4) {$\vdash_\PWK$};
           \node (r) at (2,4) {$\vdash_{\mathsf{B}_3}$};
                      \node (join rl lr) at (1,2) {$\vdash_{\mathsf{B}_3}^l\lor\vdash_{\mathsf{K^{w}_{4n}}}$};
    \node (cap1) at (1,3) {$\vdash_{\mathsf{B}_3}\cap \vdash_\PWK$};
     \node (lr) at (0,1) {$\vdash_{\mathsf{K^{w}_{4n}}}$};
           \node (rl) at (2,1) {$\vdash_{\mathsf{B}_3}^l$};
               \node (cap2) at (1,0) {$\vdash_{\mathsf{B}_3}^l\cap\vdash_{\mathsf{K^{w}_{4n}}}$};               \node (rlr) at (1,-1) {$\vdash_{\mathsf{B}_3}^{lr}$};
               \node (rlrl) at (1,-2) {$\vdash_{\mathsf{K^{w}_{4n}}}^l$};

        \draw [thick](vdash) -- (l); 
 \draw [thick](vdash) -- (r); 
 \draw [thick](r) -- (cap1); 
 \draw [thick](l) -- (cap1); 
  \draw [thick](join rl lr) -- (cap1); 
 \draw [thick](join rl lr) -- (lr); 
 \draw [thick](join rl lr) -- (rl); 
  \draw [thick](rl) -- (cap2); 
  \draw [thick](lr) -- (cap2); 
    \draw[thick] (cap2) -- (rlr); 
  \draw[thick] (rlrl) -- (rlr); 

\node[align=center,font=\bfseries, yshift=2em] (title) 
   at (current bounding box.north)
    { Figure \ref{lattice log. cl pic}};
    \end{tikzpicture}
\end{center}


\end{document}